\newcommand{\Cinf}{C^\infty}
\newcommand{\diff}{\mathrm{d}}
\newcommand{\pr}{\mathrm{pr}}
\newcommand{\n}[1]{\left\Vert #1\right\Vert} % NORMI
\newcommand{\la}{\left\langle}
\newcommand{\ra}{\right\rangle} % SISÄTULO
\newcommand{\R}{\mathbb{R}}
\newcommand{\be}{\begin{equation}}
\newcommand{\ee}{\end{equation}}
\newcommand{\bea}{\begin{eqnarray}}
\newcommand{\eea}{\end{eqnarray}}
\newcommand{\ben}{\begin{displaymath}}
\newcommand{\een}{\end{displaymath}}
\newcommand{\bean}{\begin{eqnarray*}}
\newcommand{\eean}{\end{eqnarray*}}
\newcommand{\mc}[1]{\mathcal{#1}}
\newcommand{\dif}[1]{\frac{\diff}{\diff #1}}
\newcommand{\mscr}[1]{\textrm{\fontencoding{OMS}\fontfamily{ztmcm}\selectfont#1}
}
\newcommand{\mcH}{\mathcal{H}}
\newcommand{\FF}{\mathbb{F}}
\newcommand{\Euc}{\mathrm{SE}}
\newcommand{\HH}{\mathbb{H}}
\newcommand{\II}{\mathrm{II}}
\newcommand{\GL}{\mathrm{GL}}
\newcommand{\VF}{\mathrm{VF}}
\newcommand{\SO}{\mathrm{SO}}
\newcommand{\rank}{\mathrm{rank\ }}
\newcommand{\Iso}{\mathrm{Iso}}
\newcommand{\hM}{\hat{M}}
\newcommand{\hg}{\hat{g}}
\newcommand{\RDist}{\mc{D}_{\mathrm{R}}}
\newcommand{\LRD}{\mscr{L}_{\mathrm{R}}}
\def\[#1\]{\begin{align*}#1\end{align*}}
\newtheoremstyle{theorem}{0.5cm}{\topsep}%
   {\sffamily}%         Body font
   {}%         Indent amount (empty = no indent, \parindent = para indent)
   {\bfseries}% Thm head font
   {}%        Punctuation after thm head
   {2ex}%     Space after thm head (\newline = linebreak)
   {\thmname{#1}\thmnumber{ #2}\thmnote{ #3}}%         Thm head spec
\theoremstyle{theorem}
\newtheorem{theorem}{Theorem}[section]
\newtheorem{proposition}[theorem]{Proposition}
\newtheorem{corollary}[theorem]{Corollary}
\newtheorem{remark}[theorem]{Remark}
\newtheorem{definition}[theorem]{Definition}
\newtheorem{lemma}[theorem]{Lemma}
\newcommand{\doo}{\partial}
\title[Non-Euclidean de Rham theorem]{Extension of de Rham decomposition theorem via non-Euclidean development}
\author[Y. Chitour, M. Godoy M., P. Kokkonen]{Yacine Chitour\\Mauricio Godoy Molina\\Petri Kokkonen}
\thanks{The work of the first author is supported by the ANR project GCM, program ``Blanche'', (project number NT09$\_$504490) and the DIGITEO-R\'egion Ile-de-France project CONGEO. The work of the second author is partially supported by the ERC Starting Grant 2009 GeCoMethods. The work of the third author is supported by Finnish Academy of Science and Letters,
KAUTE Foundation and l'Institut fran\c{c}ais de Finlande.}
\subjclass[2000]{53C07, 53C29, 53A55, 53B30}
\keywords{De Rham decomposition, warped product, holonomy, rolling maps, space forms}
\address{L2S, Universit\'e Paris-Sud XI, CNRS and Sup\'elec, Gif-sur-Yvette, 91192, France. }
\email{yacine.chitour@lss.supelec.fr}
\address{CMAP, \'Ecole Polytechnique, CNRS, 91128 Palaiseau, France.}
\email{mauricio.godoy@cmap.polytechnique.fr}
\address{L2S, Universit\'e Paris-Sud XI, CNRS and Sup\'elec, Gif-sur-Yvette, 91192, France and University of Eastern Finland, Department of Applied Physics, 70211, Kuopio, Finland.}
\email{petri.kokkonen@lss.supelec.fr}
\begin{document}

\maketitle

\begin{abstract}
In the present paper, we give a necessary and sufficient condition for a Riemannian manifold $(M,g)$ to have a reducible action of a hyperbolic analogue of the holonomy group. This condition amounts to a decomposition of $(M,g)$ as a warped product of a special form, in analogy to the classical de Rham decomposition theorem for Riemannian manifolds. As a consequence of these results and Berger's classification of holonomy groups, we obtain a simple necessary and sufficient condition for the complete controllability of the system of $(M,g)$ rolling against the hyperbolic space.
\end{abstract}

%\tableofcontents

\section{Introduction}

\'E. Cartan in~\cite{cartan25} defined a geometric operation, that he called development of a manifold onto one of its tangent spaces, in order to define holonomy in terms of ``Euclidean displacements'', i.e., elements of $\Euc(n)$:
\begin{center}
\emph{``Quand on d\'eveloppe l'espace de Riemann sur l'espace euclidien tangent en $A$ le long d'un cycle partant de $A$ et y revenant, cet espace euclidien subit un d\'eplacement et tous les d\'eplacements correspondant aux diff\'erents cycles possibles forment un groupe, appel\'e groupe d'holonomie.''}
\end{center}

In the terminology of Kobayashi and Nomizu~\cite[Chapter III]{kobayashi63}, an affine (resp. linear) connection corresponds to a connection in the bundle $A(M)$ of affine frames over $M$ (resp. in the bundle $L(M)$ of frames over $M$). There is a natural one-to-one mapping between the set of affine connections on $M$ and the set of linear connections on $M$, see~\cite[Proposition 3.1, Chapter III]{kobayashi63}. Thus, the above quote is nothing but the definition of the affine holonomy group, i.e., the holonomy of the affine connection corresponding to the Levi-Civita connection, seen as a subgroup of the group of Euclidean transformations $\Euc(n)$. It is known that if $(M,g)$ is complete with an irreducible Riemannian holonomy group, the affine holonomy group contains all translations of $T|_xM$, see~\cite[Corollary 7.4, Chapter IV]{kobayashi63}. In other words, under the irreducibility hypothesis, the rotational part of the affine holonomy permits to recover the translational part, and this consists of all the possible translations in~$T|_xM$.

As regards to the geometric procedure of development introduced by Cartan, it can be generalized to the development between any two Riemannian manifolds of the same dimension. In that situation, it corresponds to the control system defined by the  rolling of one Riemannian manifold onto another one with no spin and no slip. The first definition of this generalization considered only manifolds imbedded in $\R^N$, \cite{sharpe97}. A coordinate-free definition for surfaces was introduced in \cite{agrachev99,bryant-hsu}, and later extended to general manifolds in \cite{arxiv,norway}.

Moreover, it  was observed in~\cite{CK}, that the structure of the affine holonomy group characterizes the orbits of the rolling problem, when one of the manifolds involved is the Euclidean space, and thus enables one to fully address the complete controllability issue of the rolling problem. To state this observation precisely, let us recall the definition of the rolling problem. Let $(M,g)$ and $(\hM,\hg)$ be two oriented Riemannian manifolds of dimension $n$. The configuration space of the rolling is the manifold
\[
Q=Q(M,\hM)=\{A:T|_x M\to T|_{\hat{x}} \hat{M}\ |\ A \mbox{ o-isometry},\ x\in M,\ \hat{x}\in\hat{M}\},
\]
where ``o-isometry'' stands for ``orientation preserving isometry''. An absolutely continuous curve $q(t) = (x(t),\hat x(t),A(t))$ in $Q$ is a rolling curve if $A(t)X(t)$ is parallel along $\hat x(t)$ for every vector field $X(t)$ that is parallel along $x(t)$ (no twist condition) and if $A(t)\dot x(t) = \dot{\hat x}(t)$ (no slip condition). There is a distribution $\RDist$ over $Q$, called the rolling distribution, such that the rolling curves in $Q$ are exactly the integral curves of $\RDist$.

The rolling problem is said to be completely controllable if any two points of $Q$ can be connected by a rolling curve. As is well known from control theory, a sufficient condition for the system to be completely controllable is that the evaluation of the Lie algebra generated by the vector fields in $\RDist$ at every point of the underlying manifold $Q$ is of full rank, see for instance~\cite{jurd}. As simple as this algebraic condition may seem, it turns out to be extremely hard to check for the general rolling problem. 

Moreover, it is not clear if there is in general a $G$-principal bundle structure on $Q$ making $\RDist$ a $G$-principal bundle connection. However, this is indeed the case for the projection $Q(M,\hM)\to M$, when $(\hM,\hg)$ is a space form, as shown in~\cite{CK}. More precisely, if $(\hM,\hg)$ has constant sectional curvature $c$, there is a Lie group $G_c(n)$ acting on $Q$ such that $\RDist$ is a $G_c(n)$-principal bundle connection and, moreover, its orbits are all conjugate to the holonomy group of $\RDist$, which is a subgroup of $G_c(n)$.

In the case where $c=0$, we have $G_0(n)=\Euc(n)$ and this construction reduces to study the affine holonomy group of $M$. One of the main results in~\cite{CK} shows that, provided $(M,g)$ is complete and $(\hM,\hg)$ is the Euclidean space ${\mathbb R}^n$ with the standard Riemannian structure, then the rolling system is controllable if and only if $M$ has full affine holonomy. This fact can be seen as a manifestation of De Rham's decomposition theorem since, if the holonomy of $M$ is reducible, one can detect the components of $M$ via the irreducible orbits of the distribution $\RDist$.

Up to rescaling, the cases remaining are when $c=\pm1$ and, in these cases, $G_1(n)=\SO(n+1)$ and $G_{-1}(n)=\SO_0(n,1)$, the identity component of $\SO(n,1)$. In both situations the controllability for the rolling problem can be phrased in terms of the holonomy of a connection. As shown in~\cite{CK}, there is a non-degenerate metric $h_c$ and a metric connection $\nabla^{c}$ on the vector bundle $TM\oplus\R$ over $M$, such that the rolling system is controllable if and only if the holonomy group of $\nabla^{c}$, denoted by $\mcH^{c}$, is $G_c(n)$. In the case $c=1$, the aforementioned metric is positive definite; whereas in the case $c=-1$, the metric has index one.

%In the present paper, we characterize the structure of a manifold $M$ as a warped product, assuming $H^{\Rol_{-1}}$ is reducible. 
In the present paper, we characterize the structure of a complete and simply connected Riemannian manifold $(M,g)$ in terms of the rolling problem $Q=Q(M,\HH^n)$. Our main result states that the action of $\mcH^{-1}$ is reducible if and only if  there exists a complete simply connected Riemannian manifold $(M_1,g_1)$, so that $(M,g)$ is a warped product either of the form 
\begin{description}
\item[{\rm (WP1)}] $(\R\times M_1,\diff s^2\oplus_{e^{-s}} g_1)$, or

\item[{\rm (WP2)}] $(\HH^k\times M_1,{\bf g}_{k;-1}\oplus_{\cosh(d(\cdot))} g_1)$, 
\end{description}
where for each $x\in \HH^k$, $d(x)$ is the distance between $x$ and an arbitrary fixed point $x_0\in\HH^k$, and $1\leq k\leq n$. This can be seen as a ``hyperbolic'' analogue of the classical de Rham decomposition theorem for Riemannian manifolds,  see \cite{kobayashi63,sakai91}. By the classification theorem due to Berger in \cite{berger} (and also proved directly in \cite{dSO}), the only connected subgroup of the Lorentz group $\SO(n,1)$ that acts irreducibly on the Lorentzian space $\R^{n,1}$ is its identity component $\SO_0(n,1)$. This irreducibility criterion, together with our results, implies that the rolling problem $Q=Q(M,\HH^n)$ is not controllable if and only if $(M,g)$ decomposes into a warped product of the form {\rm (WP1)} or {\rm (WP2)}.

%, in which all the warping functions are simply the constant function $1$.

The case $c=1$ was addressed in \cite{CK} and it turns out to be more rigid. It is shown that if the action of $\mcH^{1}$ on the unit sphere is not transitive, then $(M, g)$ is the unit sphere with the canonical round metric. It is also important to stress that the results we present here do not correspond to the ones obtained in~\cite{wu64}. In that reference, the main result consists of an isometric decomposition of a semi-Riemannian manifold into the direct product of a semi-Riemannian irreducible submanifolds. 

%\remember{Add references to Berger, Di Scala-Olmos and Benoist-de la Harpe!! Look in the papers of Leistner. Describe the situation in the irreducible case. Make a ``nice'' \underline{if and only} if statement about complete controllability of a manifold rolling onto a space of constant negative curvature.}

The structure of the paper is the following. In Section~\ref{sec:notations} we collect results concerning the rolling problem that will be relevant in the proof of the main result. We give special emphasis to the extra symmetry that appears in the system when one of the manifolds is a space form. In Section~\ref{sec:warped}, we recall the definition of warped products, how to detect them and how to find their warping functions through a criterion due to Hiepko~\cite{hiepko}. Finally, in Section~\ref{sec:main}, we present the main results of the paper and their proofs. These results consist of a local and a global formulation of the decomposition of $(M,g)$ into a warped product of the form {\rm (WP1)} or {\rm (WP2)}, under the assumption that the action of $\mcH^{-1}$ is reducible. Both proofs are divided in two cases, depending whether the non-trivial subspace $V_1$ of $TM\oplus\R$, invariant under the action of $\mcH^{-1}$, contains a lightlike vector or not.

\section{Notations and previous results}\label{sec:notations}

Unless otherwise stated, all manifolds under consideration are smooth, connected, oriented, of finite dimension $n\geq 2$, endowed with a Riemannian metric. Similarly, all frames will be assumed positively oriented.

We intend to formulate some of the results in this paper by means of the rolling formalism presented in~\cite{arxiv,norway}. In order to do this, we need to introduce the \emph{state space $Q=Q(M,\hat{M})$} for the rolling of two $n$-dimensional connected, oriented smooth Riemannian manifolds $(M,g),(\hat{M},\hat{g})$ as
\[
Q=\{A:T|_x M\to T|_{\hat{x}} \hat{M}\ |\ A\ \textrm{o-isometry},\ x\in M,\ \hat{x}\in\hat{M}\},
\]
where ``o-isometry'' stands for ``orientation preserving isometry''.

The case in which $\hM=\R^n$ reduces to the study of the well-known concept of anti-development of curves, as observed in~\cite{GG}. The main idea consists of lifting appropriately the information about the manifold $M$ rolling on $\R^n$ to the $\GL(n)$-principal bundle of general frames. In the general case, the situation is more complicated.

\subsection{The rolling problem}

For $q=(x,\hat{x};A)\in Q$ and $X\in T|_x M$ we define the \emph{rolling lift} $\LRD(X)|_q\in T|_q Q$ as
\begin{align}\label{eq:2.5:3}
\LRD(X)|_q=\dif{t}\big|_0 (P_0^t(\hat{\gamma})\circ A\circ P_t^0(\gamma)),
\end{align}
where $\gamma,\hat{\gamma}$ are any smooth curves in $M,\hat{M}$, respectively, such that $\dot{\gamma}(0)=X$ and $\dot{\hat{\gamma}}(0)=AX$, and $P^b_a(\gamma)$ denotes the parallel transport along $\gamma$ from $\gamma(a)$ to $\gamma(b)$.

The \emph{rolling distribution} $\RDist$ on $Q$ is the $n$-dimensional smooth distribution defined, for $q=(x,\hat{x};A)\in Q$, by
\begin{align}\label{eq:2.5:1}
\RDist|_{q}=\LRD(T|_x M)|_{q}.
\end{align}

An absolutely continuous curve $t\mapsto q(t)=(x(t),\hat{x}(t);A(t))$ is a rolling curve if and only if it is almost everywhere tangent to the distribution $\RDist$, see \cite{arxiv,norway} for a description using local coordinates. We use $\mc{O}_{\RDist}(q)$ to denote the $\RDist$-orbit passing through $q$.
Similarly, if $q_0=(x_0,\hat{x}_0;A_0)\in Q$ and $\gamma:[a,b]\to M$ is a curve
such that $\gamma(a)=x_0$, we let $q_{\RDist}(\gamma,q_0)(t)$, $t\in [a,b']$,
to be the unique rolling curve through $q_0$ that projects to $\gamma$ on $M$
Here $b'\leq b$ in general; if $(M,g)$ is complete, then one can show that $b=b'$,
see \cite{CK,kobayashi63}. 
%\remember{Added the last 2 sentences + references. Feel free to put more of them.}

%\begin{remark}
%The goal of the present article is to relate de Rham's decomposition theorem with the study of global properties of the orbits of the distribution $\RDist$. Recall that the decomposition theorem states that given a complete, simply connected, Riemannian manifold $M$, then there are complete, simply connected Riemannian manifolds $M_i$, $i=0,1,\ldots,k$, such that 
%\begin{itemize}
%\item $M_0$ is an Euclidean space with the canonical Riemannian metric,

%\item the action of the Riemannian holonomy groups ${\rm Hol}(M_i)$ on a tangent space $T_{x_i}M_i$ has no nontrivial invariant subspace,

%\item $M$ is isometric to the Riemannian direct product $M_0\times M_1\times\cdots\times M_k$, and

%\item the group ${\rm Hol}(M)$ is canonically isomorphic to the direct product ${\rm Hol}(M_1)\times\cdots\times {\rm Hol}(M_k)$
%\end{itemize}
%Moreover, this decomposition is unique up to a permutation of the factors.
%\end{remark}

\begin{remark}
The use of the adjective ``rolling'' in the previous definitions has its origin in the classical kinematic model of one Riemannian manifold rolling over another one of the same dimension, without spinning nor slipping (cf. \cite{agrachev99,agrachev04,arxiv,norway,sharpe97}). This kinematic model can be traced back to the definition of holonomy by \'E. Cartan (cf. \cite{bryant06}) and has important applications in robotics (e.g. the plate-ball problem \cite{ACL,marigo-bicchi,murray-sastry}). The main idea in this formulation is that each point $(x,\hat{x};A)$ of the state space $Q$ can be viewed as describing a contact point of the two manifolds which is given by the points $x$ and $\hat{x}$ of $M$ and $\hat{M}$, respectively, and an isometry $A$ of the tangent spaces $T|_x M$, $T|_{\hat{x}} \hat{M}$ at this contact point, measuring the relative orientation of these tangent spaces. A curve $t\mapsto q(t)=(x(t),\hat{x}(t);A(t))$ in $Q$ is a rolling if the following constraints (see e.g. \cite{agrachev99}, \cite[Chapter 24]{agrachev04}, \cite{chelouah01}) are satisfied
\begin{itemize}
\item[(i)] The \emph{no-spinning} condition: for every absolutely continuous curve $[a,b]\to TM$; $t\mapsto X(t)$ of vectors along $t\mapsto x(t)$, we have
\be\label{eq:nospin}
\nabla_{\dot{x}(t)} X(t)=0 \quad \Longrightarrow\quad \hat{\nabla}_{\dot{\hat{x}}(t)} (A(t)X(t))=0
\quad\mbox{for almost all}\ t\in [a,b].
\ee 

\item[(ii)] The \emph{no-slipping} condition:
\be\label{eq:noslip}
A(t)\dot{x}(t)=\dot{\hat{x}}(t)\quad \mbox{for almost all}\ t\in [a,b].
\ee
\end{itemize}
\end{remark}

\subsection{Global properties of $\RDist$-orbits}

An important technical result shown in~\cite{CK} is the action of Riemannian isometries of $M$ and $\hM$ on the state space $Q$. 

\begin{proposition}\label{globalDr}
Let $F\in\Iso(M,g)$ and $\hat{F}\in\Iso(\hat{M},\hat{g})$ be Riemannian isometries of $(M,g)$ and $(\hat{M},\hat{g})$ respectively.
Define smooth free right and left actions of $\Iso(M,g)$, $\Iso(\hat{M},\hat{g})$ on $Q$ by
\[
&q_0\cdot F:=(F^{-1}(x_0),\hat{x}_0;A_0\circ F_*|_{F^{-1}(x_0)}),
\\
&\hat{F}\cdot q_0:=(x_0,\hat{F}(\hat{x}_0);\hat{F}_*|_{\hat{x}_0}\circ A_0),
\]
where $q_0=(x_0,\hat{x}_0;A_0)\in Q$.
We also set 
$
\hat{F}\cdot q_0\cdot F:=(\hat{F}\cdot q_0)\cdot F=\hat{F}\cdot (q_0\cdot F).
$
Then for any $q_0=(x_0,\hat{x}_0;A_0)\in Q$, any absolutely continuous curve $\gamma:[0,1]\to M$, $\gamma(0)=x_0$,
and any isometries $F\in\Iso(M,g)$, $\hat{F}\in\Iso(\hat{M},\hat{g})$, one has, for all $t\in [0,1]$,
\begin{align}\label{eq:equivariance_of_rolling}
\hat{F}\cdot q_{\RDist}(\gamma,q_0)(t)\cdot F
=q_{\RDist}(F^{-1}\circ\gamma,\hat{F}\cdot q_0\cdot F)(t).
\end{align}
In particular,
$
\hat{F}\cdot\mc{O}_{\RDist}(q_0)\cdot F=\mc{O}_{\RDist}(\hat{F}\cdot q_0\cdot F).
$
\end{proposition}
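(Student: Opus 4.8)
The plan is to show that applying the combined action $\hat F\cdot(\,\cdot\,)\cdot F$ to the rolling curve $q_{\RDist}(\gamma,q_0)$ yields a rolling curve with exactly the projection and initial point prescribed on the right-hand side of \eqref{eq:equivariance_of_rolling}, and then to invoke the uniqueness of rolling curves with a given projection and initial condition. The single geometric fact driving the argument is that a Riemannian isometry preserves the Levi-Civita connection and therefore commutes with parallel transport: for $F\in\Iso(M,g)$, any absolutely continuous curve $\alpha$ and any vector $V$,
\[ F_*\big(P^b_a(\alpha)V\big)=P^b_a(F\circ\alpha)\big(F_*V\big), \]
equivalently $F_*$ carries fields parallel along $\alpha$ to fields parallel along $F\circ\alpha$. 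Before the main computation I would record that the two actions are well defined into $Q$ (the displayed composites are again orientation-preserving isometries of the appropriate tangent spaces, using that $F,\hat F$ preserve orientation), mutually commuting, smooth, and free: if $q_0\cdot F=q_0$ then $F(x_0)=x_0$ and $F_*|_{x_0}=\id$, which forces $F=\id$ on the connected manifold $M$, and symmetrically for $\hat F$.

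Write $q(t)=q_{\RDist}(\gamma,q_0)(t)=(\gamma(t),\hat x(t);A(t))$ and set $\tilde q(t):=\hat F\cdot q(t)\cdot F$. Unwinding the definitions of the two actions gives
\[ \tilde q(t)=\Big(F^{-1}(\gamma(t)),\ \hat F(\hat x(t));\ \hat F_*|_{\hat x(t)}\circ A(t)\circ F_*|_{F^{-1}(\gamma(t))}\Big). \]
Its projection to $M$ is $t\mapsto F^{-1}(\gamma(t))=(F^{-1}\circ\gamma)(t)$, and at $t=0$ it equals $\hat F\cdot q_0\cdot F$, so both the base curve and the initial condition already agree with the right-hand side of \eqref{eq:equivariance_of_rolling}. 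It remains only to verify that $\tilde q$ is a rolling curve, i.e. that it satisfies the no-slip condition \eqref{eq:noslip} and the no-spin condition \eqref{eq:nospin}.

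For no-slip, using $\dif{t}F^{-1}(\gamma(t))=(F_*)^{-1}\dot\gamma(t)$ and $\dif{t}\hat F(\hat x(t))=\hat F_*\dot{\hat x}(t)$ together with the no-slip condition $A(t)\dot\gamma(t)=\dot{\hat x}(t)$ for $q$, one computes
\[ \big(\hat F_*\circ A(t)\circ F_*\big)\big((F_*)^{-1}\dot\gamma(t)\big)=\hat F_*\big(A(t)\dot\gamma(t)\big)=\hat F_*\dot{\hat x}(t)=\dif{t}\hat F(\hat x(t)), \]
which is precisely no-slip for $\tilde q$. For no-spin, let $Y(t)$ be parallel along $F^{-1}\circ\gamma$; by the commutation fact $F_*Y(t)$ is parallel along $\gamma$, the no-spin property of $q$ then makes $A(t)F_*Y(t)$ parallel along $\hat x$, and a second application of the commutation fact to $\hat F$ makes $\hat F_*A(t)F_*Y(t)$ parallel along $\hat F\circ\hat x$. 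As this last field equals $\big(\hat F_*\circ A(t)\circ F_*\big)Y(t)$, no-spin holds for $\tilde q$. Hence $\tilde q$ is the unique rolling curve through $\hat F\cdot q_0\cdot F$ projecting to $F^{-1}\circ\gamma$, which is exactly \eqref{eq:equivariance_of_rolling}.

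The orbit identity then follows at once: since $\Phi:=\hat F\cdot(\,\cdot\,)\cdot F$ is a diffeomorphism of $Q$ that carries every rolling curve to a rolling curve, it preserves the $\RDist$-orbit structure, and because $\Phi(q_0)=\hat F\cdot q_0\cdot F$ we obtain $\hat F\cdot\mc{O}_{\RDist}(q_0)\cdot F=\mc{O}_{\RDist}(\hat F\cdot q_0\cdot F)$. I expect no conceptual obstacle here; the only point demanding care is the bookkeeping in the no-slip and no-spin steps, specifically keeping track that $F_*$ is evaluated at the moving base point $F^{-1}(\gamma(t))$ and $\hat F_*$ at $\hat x(t)$, so that the cancellations and the parallel-transport commutations are applied at the correct fibres. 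The entire substance of the proof is the commutation of isometries with parallel transport stated at the outset.
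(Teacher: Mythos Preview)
Your argument is correct and is the natural one: verify that the transformed curve $\tilde q(t)=\hat F\cdot q_{\RDist}(\gamma,q_0)(t)\cdot F$ satisfies the no-slip and no-spin conditions along $F^{-1}\circ\gamma$ with initial value $\hat F\cdot q_0\cdot F$, then invoke uniqueness of the rolling lift. The computations you give for both conditions are accurate, and the only substantive ingredient, namely $F_*\,P^b_a(\alpha)=P^b_a(F\circ\alpha)\,F_*$ for Riemannian isometries, is exactly what is needed.

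Note, however, that the paper does not prove this proposition at all: it is quoted from \cite{CK} (see the sentence preceding the statement), so there is no ``paper's own proof'' to compare against here. Your approach is precisely what one would expect such a proof to look like. Two small remarks: first, for the actions to land in $Q$ you need $F_*$ and $\hat F_*$ to be orientation-preserving, so strictly speaking the statement should restrict to the orientation-preserving components of $\Iso(M,g)$ and $\Iso(\hat M,\hat g)$; you allude to this, and it is a point about the statement rather than your proof. Second, your freeness argument (an isometry fixing a point with identity differential there is the identity on a connected manifold) is standard and correct.
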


\vspace{0.3cm}

An initial reduction of the problem is the fact that the controllability question for the rolling problem for $M$ and $\hM$ is equivalent to study the controllability of Riemannian coverings of $M$ and $\hM$ rolling against each other (cf.~\cite{arxiv}). An immediate consequence, is that one can assume with no loss of generality that both manifolds $M$ and $\hM$ are simply connected.

\subsection{Space forms and their isometry groups}

%In this section, we study the controllability problem of $\srol$ when $\hat{M}$ is a simply connected $n$-dimensional manifold with non zero constant curvature equal to $\frac{1}{k}$,  with $k\neq 0$.

The $n$-dimensional space form $\FF^n_{c}$ of curvature $c\neq0$ as a subset of $\R^{n+1}$, $n\geq 1$, %\remember{Changed $n\geq 2$ to $n\geq 1$}  
given by
\[
\FF^n_{c}:=\big\{(x_1,\dots,x_{n+1})\in\R^{n+1}\ |&\ c(x_1^2+\dots+x_n^2)+x_{n+1}^2=1, \\
&x_{n+1}+\frac{c}{|c|}\geq 0\big\}.
\]
Equip $\FF^n_{c}$ with a Riemannian metric ${\bf g}_{n;c}$ defined as 
the restriction to $\FF^n_{c}$ of the non-degenerate symmetric $(0,2)$-tensor 
$
s_{n;c}:=(\diff x_1)^2+\dots+(\diff x_n)^2+c^{-1}(\diff x_{n+1})^2.
$
The condition $x_{n+1}+\frac{c}{|c|}\geq 0$ in the definition of $\FF^n_{c}$
guarantees that $\FF^n_{c}$ is connected also when $c<0$.
We denote, as usual, $\FF^n_{1}$ and $\FF^n_{-1}$ by $S^n$ and $\HH^n$ respectively.

\begin{remark}
Note that in the definition above there is an underlying continuity with respect to the curvature parameter $c$, once we disregard the connectedness assumption. More precisely, the set
\[
\big\{(x_1,\dots,x_{n+1})\in\R^{n+1}\ |&\ c(x_1^2+\dots+x_n^2)+x_{n+1}^2=1\big\}
\]
consists of the two hyperplanes $x_{n+1}=\pm 1$ when $c=0$, a two-sheeted hyperboloid with fixed vertices $(0,\ldots,0,\pm1)$ and foci $(0,\ldots,0,\pm\frac{c-1}{c})$ when $c<0$, and an ellipse with vertices $(0,\ldots,0,\pm1)$ in the $x_{n+1}$-axis and semiaxes of length $\frac{1}{\sqrt{c}}$ on the hyperplane $x_{n+1}=0$. This is in accordance with the definition of the tensor $s_{n;c}$ since for it to behave well when $c\to0^+$ or $c\to0^-$, one needs to impose $\diff x_{n+1}=0$ when $c=0$.
%When $c=0$ there is a  continuity  what happens when $c\to 0$: keep the disconnected components when $c<0$, then $\hat{M}_{n;c}$ ``converges'' to the two hyperplanes $x_{n+1}=\pm 1$, and the same happens for the ellipse $\hat{M}_{n;c}$ for $c>0$. Moreover, for the metric $s_{n;c}$ to behave well, one needs $\diff x_{n+1}=0$ when $c=0$.
 \end{remark}
 
 \vspace{0.3cm}

Let $G_c(n)$ be the identity component of the Lie group of linear maps $\R^{n+1}\to \R^{n+1}$
that leave invariant the bilinear form
\[
\la x,y\ra_{n;c}:=\sum_{i=1}^n x_i y_i+c^{-1}x_{n+1}y_{n+1},
\]
for $x=(x_1,\dots,x_{n+1})$, $y=(y_1,\dots,y_{n+1})$. Observe that $G_1(n)=\SO(n+1)$ and $G_{-1}(n)=\SO_0(n,1)$, the identity component of $\SO(n,1)$.

If $c=0$, the space form $(\FF^n_0,{\bf g}_{n;0})$ is simply equal to $\R^n$ with the Euclidean
metric, $G_{0}(n)$ is set to be the group $\Euc(n)$, the special Euclidean group of $(\FF^n_0,{\bf g}_{n;0})$.
%which is $\R^n\times \SO(n)$ as a set and group with the multiplication
%\[
%(\hat{x},B)\star (\hat{y},C)=(B\hat{y}+\hat{x},B\circ C).
%\]
Recall that $\mathrm{SE}(n)$ 
is equal to $\R^n\times \SO(n)$ as a set,
and is equipped with the group operation $\star$ given by
\[
(v,L)\star (u,K):=(Lu+v,L\circ K).
\]
%Here $\SO(V)$ is defined with respect to the inner product $h$ of $V$.
The natural action, also written as $\star$, of $\SO(n)$ on $\R^n$
is given by
\[
(u,K)\star v:=Kv+u,\quad (u,K)\in\SO(n),\ v\in \R^n.
\]
Finally recall that, with this notation,
the isometry group of $(\FF^n_c,{\bf g}_{n;c})$ is equal to $G_c(n)$ 
for all $c\in\R$ (cf. \cite{kobayashi63}).
%The isomorphism $(\FF^n_c,{\bf g}_{n;c})\to G_c(n)$ is also described explicitly in the next proof. From now on we set $(\hat{M},\hat{g})=(\FF^n_c,{\bf g}_{n;c})$ for $c\in\R$. In the next proposition we detail the principal bundle structure of $\pi_{Q,M}$.

\subsection{Reduction of the rolling problem}\label{sec:reduction}

When rolling against a space form, it is possible to reduce the controllability problem to the study of certain holonomy groups. In other words, one can consider the change of the initial state of the system after rolling along piecewise $C^1$-loops in $M$ based at $x$.

%In the Euclidean case it is enough to study the classical Riemannian holonomy group, while for curved space forms it is necessary to introduce a connection in the bundle $TM\oplus\R\to M$.

%Special care must be taken at this stage, since this connection in general has torsion.

The fundamental feature of rolling over a space form lies in the fact that there is a $G_c(n)$-principal bundle structure for the state space compatible with the distribution $\RDist$, i.e. $\RDist$ is a $G_c(n)$-principal bundle connection. This result was proved in~\cite{CK} by using Proposition~\ref{globalDr}, and it is provided below.
\begin{proposition}\label{pr:principal}
Let $Q=Q(M,\FF^n_c)$ be the configuration space of rolling $M$ against the space form $\FF^n_c$. The following hold
\begin{itemize}
\item[(i)]
The bundle $\pi_{Q,M}\colon Q\to M$ is a principal $G_c(n)$-bundle
with a left action $\mu:G_c(n)\times Q\to Q$ defined for every $q=(x,\hat{x};A)$ by
\[
\mu((\hat{y},C),q)=&(x,C\hat{x}+\hat{y};C\circ A), && \textrm{if $c=0$}, & \\
\mu(B,q)=&(x,B\hat{x};B\circ A), && \textrm{if $c\neq 0$}. &
\]
% and in the case $c\neq 0$, by $B\circ A$ we understand the range $T|_{\hat{x}} \hat{M}_{n;c}$
%of $A$ to be identified with a linear subspace of $\R^{n+1}$
%in the canonical way.
Moreover, the action $\mu$ preserves the distribution $\RDist$
i.e., for any $q\in Q$ and $B\in G_c(n)$,
$
(\mu_B)_*\RDist|_q=\RDist|_{\mu(B,q)}
$
where $\mu_B:Q\to Q$; $q\mapsto \mu(B,q)$.

\item[(ii)]
%We will denote the left action of $B\in G_c(n)$ on $q\in Q$ usually by $B\cdot q=\mu(B,q)$.
For any given $q=(x,\hat{x};A)\in Q$,
there is a unique subgroup $\mc{H}_q$ of $G_c(n)$,
called the holonomy group of $\RDist$,
such that
\[
\mu(\mc{H}_q\times\{q\})=\mc{O}_{\RDist}(q)\cap \pi_{Q,M}^{-1}(x).
\]
In addition, if $q'=(x,\hat{x}';A')\in Q$ is in the same $\pi_{Q,M}$-fiber as $q$,
then $\mc{H}_q$ and $\mc{H}_{q'}$ are conjugate in $G_c(n)$
and all conjugacy classes of $\mc{H}_q$ in $G_c(n)$ are of the form~$\mc{H}_{q'}$.
%This conjugacy class will be denoted by $\mc{H}$.
%Moreover, $\pi_{\mc{O}_{\RDist}(q),M}:\mc{O}_{\RDist}(q)\to M$ is a principal $\mc{H}$-bundle over $M$.
\end{itemize}
\end{proposition}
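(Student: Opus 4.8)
The plan is to realize the action $\mu$ as the restriction to $\hM=\FF^n_c$ of the left $\Iso(\hat M,\hat g)$-action of Proposition~\ref{globalDr}, using that $\Iso(\FF^n_c,{\bf g}_{n;c})=G_c(n)$, and then to verify that $\mu$ is a free, fiberwise-transitive action admitting local trivialisations. First I would check freeness: if $\mu(B,q)=q$ for $q=(x,\hat x;A)$ and $c\neq 0$ (the case $c=0$ being analogous with the Euclidean group), then $B\hat x=\hat x$ and $B\circ A=A$; since $A$ is invertible, the differential of $B$ at $\hat x$ is the identity, and an isometry of a space form is determined by its $1$-jet at a point, so $B=\id$. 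For fiberwise transitivity, given $q=(x,\hat x;A)$ and $q'=(x,\hat x';A')$ over the same $x$, I must produce $B\in G_c(n)$ with $B\hat x=\hat x'$ and $B_*\circ A=A'$, that is, an isometry carrying the oriented orthonormal frame determined by $A$ to the one determined by $A'$; such a $B$ exists and is unique because $G_c(n)$, the isometry group of the space form $\FF^n_c$, acts simply transitively on its bundle of oriented orthonormal frames. Local triviality then follows by fixing a local oriented orthonormal moving frame on an open set $U\subseteq M$: composing an o-isometry in $\pi_{Q,M}^{-1}(U)$ with this frame produces an oriented orthonormal frame of $\FF^n_c$, hence---via the identification of the oriented orthonormal frame bundle of $\FF^n_c$ with $G_c(n)$---a $\mu$-equivariant trivialisation $\pi_{Q,M}^{-1}(U)\cong U\times G_c(n)$.

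Next, the invariance of $\RDist$ under $\mu$ is where Proposition~\ref{globalDr} does the work. Applying the equivariance identity~\eqref{eq:equivariance_of_rolling} with $F=\id_M$ and $\hat F=B\in G_c(n)$ shows that $\mu_B$ carries the rolling curve $q_{\RDist}(\gamma,q_0)$ onto $q_{\RDist}(\gamma,\mu(B,q_0))$; since $\RDist|_q$ is spanned by the velocities at $t=0$ of rolling curves issuing from $q$, differentiating at $t=0$ yields $(\mu_B)_*\RDist|_q=\RDist|_{\mu(B,q)}$. As $\RDist$ projects isomorphically onto $TM$ under $\pi_{Q,M}$, it is a horizontal complement of the vertical bundle, so this invariance is precisely the statement that $\RDist$ is a $G_c(n)$-principal connection.

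For part (ii) I would run the standard holonomy argument for a principal connection. Fix $q=(x,\hat x;A)$; for a piecewise $C^1$ loop $\gamma$ based at $x$, the rolling curve $q_{\RDist}(\gamma,q)$ ends in the fiber over $x$, so by freeness and fiberwise transitivity there is a unique $B_\gamma\in G_c(n)$ with $q_{\RDist}(\gamma,q)(1)=\mu(B_\gamma,q)$. The $\mu$-invariance of $\RDist$ implies that rolling along a concatenation of loops corresponds to composing the associated group elements, so $\gamma\mapsto B_\gamma$ is a homomorphism from the loop group at $x$ into $G_c(n)$; its image is the subgroup $\mc{H}_q$, whose uniqueness is immediate from freeness of $\mu$, and $\mu(\mc{H}_q\times\{q\})$ is exactly the set of endpoints over $x$ of rolling curves through $q$, namely $\mc{O}_{\RDist}(q)\cap\pi_{Q,M}^{-1}(x)$. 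Finally, if $q'=\mu(B_0,q)$ lies in the same fiber, the same invariance gives $q_{\RDist}(\gamma,q')(1)=\mu(B_0B_\gamma B_0^{-1},q')$, so $\mc{H}_{q'}=B_0\mc{H}_q B_0^{-1}$; letting $B_0$ range over $G_c(n)$ yields all conjugates.

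The main obstacle is the geometric input of part (i)---the simple transitivity of $\mu$ on fibers together with local triviality---which rests on the strong homogeneity of the space form, namely that $G_c(n)$ acts simply transitively on its oriented orthonormal frame bundle. Once the principal bundle structure and the $\mu$-invariance of $\RDist$ are established, part (ii) follows formally from the freeness of the action and the equivariance of rolling curves supplied by Proposition~\ref{globalDr}.
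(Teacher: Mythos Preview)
The paper does not supply its own proof of this proposition; it is quoted from \cite{CK}, with the sole indication that the argument there ``uses Proposition~\ref{globalDr}''. Your proposal is precisely in that spirit: you recognise $\mu$ as the specialisation of the left $\Iso(\hat M,\hat g)$-action of Proposition~\ref{globalDr} to $\hat M=\FF^n_c$, deduce the $\RDist$-invariance from the equivariance identity~\eqref{eq:equivariance_of_rolling} with $F=\id_M$, and then run the standard holonomy construction for a principal connection. The freeness and fiberwise-transitivity arguments (via the simple transitivity of $G_c(n)$ on the oriented orthonormal frame bundle of $\FF^n_c$) are correct, as is the local trivialisation via a moving frame on $M$. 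There is nothing to compare against in the paper itself, but your outline matches the approach the authors point to and is sound.
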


\vspace{0.3cm}

An open problem related to the proposition above asks for the extent to which this result holds. More precisely, given two Riemannian manifolds $M$ and $\hM$ of dimension $n\geq3$ and the canonical projection $\pi_{Q,M}\colon Q=Q(M,\hM)\to M$, can one give conditions on the manifolds so that there exists a $G$-principal bundle structure for some Lie group $G$ so that the rolling distribution $\RDist$ is $G$-equivariant? For instance, this is true if one of the manifolds is a space form.
%In the two dimensional case, this was observed in \cite{agrachev99,agrachev04}.

For the case $c=0$, one can take advantage of the semi-direct product structure of $\Euc(n)$ by considering the projection of the orbit onto $\SO(n)$, which is nothing but the Riemannian holonomy group of $M$. As a result, it is proved in~\cite{CK} that complete controllability holds if and only if $M$ has full holonomy.

For the case when $c\neq0$, the problem is more subtle. It was shown in~\cite{CK} that this principal $G_c(n)$-bundle structure implies the existence of a vector bundle connection $\nabla^{c}$ on the vector bundle $\pi_{TM\oplus\R}:TM\oplus\R\to M$, called the \emph{rolling connection}, defined as follows: for every $x\in M$, $X\in T|_x M$, $(Y,s)\in\VF(M)\times\Cinf(M)$,
\begin{align}\label{eq:nabla_rol_explicit}
\nabla^{c}_X (Y,s)=\Big(\nabla_X Y+s(x)X,X(s)-cg\big(Y|_x,X)\Big).
\end{align}
Here we have canonically identified the space of smooth sections $\Gamma(\pi_{TM\oplus\R})$
of $\pi_{TM\oplus\R}$ with $\VF(M)\times \Cinf(M)$.
%\remember{Added this.}

The connection $\nabla^{c}$ is a metric connection with respect to the fiber inner product $h_c$ on $TM\oplus\R$ defined by
\[
h_c((X,r),(Y,s))=g(X,Y)+c^{-1}rs,
\]
where $X,Y\in T|_x M$, $r,s\in\R$. Its holonomy group is denoted by $\mcH^{c}$.

After a trivial scaling, it is enough to consider only the cases $c=\pm1$. The use of the rolling connection $\nabla^{c}$ on the vector bundle $TM\oplus\R$ has the advantage that it allows one to prove that complete controllability of the rolling system is equivalent to the fact that $\mcH^{c}$ equals $\SO(n+1)$ for the spherical case $c=1$ and $\SO_0(n,1)$ for the hyperbolic case $c=-1$.

%Minimal amount of notation

%Short review of state space and global properties of orbits (but really short)

%Add the warping stuff here. Keep the definition 4.3 and theorem 4.4 in ``Main Result and proof'' (think of a sexier title for the section)

\section{Warped products}\label{sec:warped}

In order to present our results, we need some standard material on warped products, as presented for example in~\cite{oneill83}, as well as means to detect when a manifold can be decomposed as the warping of two (or more) manifolds.

\subsection{Definitions}

\begin{definition}\label{def:warped}
\begin{itemize}
\item[(i)] Let $(N,h),(M,g)$ be Riemannian manifolds
and $f\in\Cinf(N)$ a non-vanishing function.
Then the manifold $N\times M$ equipped with the metric
\[
(h\oplus_{f} g)|_{(y,x)}:=h|_y+f(y)^2 g|_x,\quad (y,x)\in N\times M,
\]
is a Riemannian manifold called the \emph{warped product} of $(N,h)$
and $(M,g)$ with warping function $f$.
%We write this Riemannian manifold as $(N,h)\times_f (M,g)$.

\item[(ii)] Let $(N,h),(M_1,g_1),(M_2,g_2)$ be Riemannian manifolds
and $f_1,f_2\in\Cinf(N)$. Denote by $\pr_1:N\times M_1\to N$.
Then $(N\times M_1\times M_2,(h\oplus_{f_1} g_1)\oplus_{\pr_1^*(f_2)} g_2)$
%$((N,h)\times_{f_1} (M_1,g_1))\times_{\pr_1^*(f_2)} (M_2,g_2)$
is called the \emph{doubly warped product} of $(N,g),(M_1,g_1),(M_2,g_2)$
with warping functions $f_1,f_2$.
We denote its metric simply by $h\oplus_{f_1}g_1\oplus_{f_2} g_2$.
%We write this Riemannian space as $(N,h)\times_{f_1} (M_1,g_1)\times_{f_2} (M_2,g_2)$
%and its metric is denoted by $h_{(f_1,f_2)}$.
\end{itemize}
\end{definition}

\begin{remark}
Note that the metric of the above doubly warped product at $(y,x_1,x_2)\in N\times M_1\times M_2$ has the form
\[
(h\oplus_{f_1}g_1\oplus_{f_2} g_2)|_{(y,x_1,x_2)}=h|_{y}+f_1(y)^2 g_1|_{x_1}+f_2(y)^2 g_2|_{x_2}.
\]
Therefore, it is easy to see that $(N\times M_1\times M_2,h\oplus_{f_1}g_1\oplus_{f_2} g_2)$ and $(N\times M_2\times M_1,h\oplus_{f_2}g_2\oplus_{f_1} g_1)$ are isometric.
\end{remark}

%\remember{I changed $f(y)$ to just $f$ etc. Sorry guys...}

\subsection{Detecting warped products}

\begin{definition}\label{def:spherical}
A submanifold $N$ of a Riemannian manifold $(M,g)$ is \emph{spherical} if there is a local section $\nu$ of the normal bundle $TN^\perp$ such that:
\begin{itemize}
\item The second fundamental form $\II_N$ of $N$ has the form 
\[
\II_N(X,Y)=g(X,Y)\nu,\quad \forall X,Y\in T|_x N,\ x\in N.
\]

\item The section $\nu$ satisfies
\begin{align}\label{sphericalmfld}
\nabla_X \nu\in TN,\quad \forall X\in TN.
\end{align}
\end{itemize}
\end{definition}

The last condition \eqref{sphericalmfld} means that $\nu$ is parallel with respect to the \emph{normal} connection of $N$.

\begin{theorem}\label{th:warped} (\cite{hiepko})
Let $(M,g)$ be a Riemannian manifold
and suppose there is a smooth constant rank distribution $\mc{D}$ on $M$
with the following properties:
\begin{itemize}
\item[(i)] Both $\mc{D}$ and $\mc{D}^\perp$ are integrable.
\item[(ii)] The integral manifolds of $\mc{D}^\perp$ are totally geodesic.
\item[(iii)] The integral manifolds of $\mc{D}$ are spherical.
\end{itemize}
Then $(M,g)$ is locally a warped product.
If moreover $(M,g)$ is complete and simply connected,
then $(M,g)$ is globally a warped product.
Finally, if $f$ is the warping function and $\nu$ is a section of the bundle $\mc{D}^\bot$ as in Definition \ref{def:warped}, then
\[
\nu=-\frac{\nabla f}{f}.
\]
\end{theorem}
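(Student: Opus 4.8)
The plan is to prove the local statement by producing an adapted coordinate chart in which the metric visibly takes warped-product form, and then to upgrade to the global statement under completeness and simple connectivity by a de Rham-type globalization argument. Throughout I identify $\mc{D}$ with the tangent distribution of the prospective fibre (the warped factor, whose leaves are spherical) and $\mc{D}^\perp$ with the tangent distribution of the prospective base (the totally geodesic factor); the warping function $f$ will turn out to be a function on the base alone.

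First I would invoke the Frobenius theorem for the two complementary integrable distributions $\mc{D}^\perp$ and $\mc{D}$ to obtain, around any point, product coordinates $(u^1,\dots,u^p,v^1,\dots,v^q)$ with $\mc{D}^\perp=\spn\{\partial_{u^a}\}$ and $\mc{D}=\spn\{\partial_{v^i}\}$. Since $\mc{D}\perp\mc{D}^\perp$, the metric is block diagonal,
\[
g=g_{ab}(u,v)\,\diff u^a\diff u^b+g_{ij}(u,v)\,\diff v^i\diff v^j .
\]
Now I would feed in the three hypotheses one at a time by reading off the relevant Christoffel symbols. Total geodesy of the $\mc{D}^\perp$-leaves forces the vertical part of $\nabla_{\partial_{u^a}}\partial_{u^b}$ to vanish, which, using block diagonality, gives $\partial_{v^k}g_{ab}=0$; hence $g_{ab}=g_{ab}(u)$ is a genuine metric $h$ on the base. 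The umbilicity part of the spherical condition, $\II(\partial_{v^i},\partial_{v^j})=g_{ij}\nu$, translates into $\partial_{u^b}g_{ij}=-2\nu_b\,g_{ij}$, so the vertical block is a conformal rescaling $g_{ij}(u,v)=\rho(u,v)\gamma_{ij}(v)$ of a fixed fibre metric, with $\partial_{u^b}\log\rho=-2\nu_b$. Finally, the parallel-normal part of the spherical condition, $\nabla_{\partial_{v^i}}\nu\in\mc{D}$, together with $g_{ab}=g_{ab}(u)$, forces $\partial_{v^i}\nu^a=0$, i.e. $\nu$ depends only on $u$; then $\partial_{u^b}\log\rho$ depends only on $u$ and, after normalizing $\rho$ to $1$ on a base slice, $\rho=f(u)^2$. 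This yields $g=h\oplus_{f}(\gamma_{ij}\diff v^i\diff v^j)$ locally, and reading off $\partial_{u^b}\log f=-\nu_b$ gives exactly $\nu=-\nabla f/f$.

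For the global statement I would argue as in the proof of the de Rham decomposition theorem, with the modification that $\mc{D}$ is only spherical rather than parallel. The previous step shows that the metric dual $\nu^\flat$ equals $-\diff\log f$ on each chart, hence is a globally defined closed $1$-form; simple connectivity then makes it exact, producing a single positive function $f$ on $M$ with $\nu=-\nabla f/f$. Completeness guarantees that the leaves of both foliations are complete, and simple connectivity kills the holonomy of the two foliations, so that gluing the local product charts along the (complete, totally geodesic) base leaves and the (complete, mutually homothetic) fibre leaves produces a global diffeomorphism $M\cong N\times M'$ onto the product of a fixed base leaf $N$ and a fixed fibre leaf $M'$; transporting the local normal forms then shows the metric is globally $h\oplus_{f}g_{M'}$.

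The main obstacle is this last globalization. The local computation is essentially bookkeeping with Christoffel symbols, but passing from local warped products to a single global one requires controlling how the local charts overlap: one must verify that the totally geodesic base leaves are mutually isometric, that the spherical fibre leaves are mutually homothetic with ratio governed by $f$, and that the holonomy obstruction to assembling the global product genuinely vanishes under simple connectivity. This is precisely the content of Hiepko's theorem, and I would either reproduce the de Rham-style holonomy argument adapted to a non-autoparallel distribution or cite \cite{hiepko} for the completion.
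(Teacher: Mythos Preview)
The paper does not prove this theorem at all: it is quoted from Hiepko \cite{hiepko} as a known result and used as a black box (the subsequent Remark~\ref{re:warped} merely points to equations (11) and (17) of \cite{hiepko} for the precise form of the local isometry). So there is no ``paper's own proof'' to compare against; your proposal goes well beyond what the paper does.

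That said, your local argument is the standard one and is correct in outline: Frobenius gives the product chart, block-diagonality is automatic from orthogonality, total geodesy of $\mc{D}^\perp$ kills the $v$-dependence of the base block, umbilicity makes the fibre block conformal in the $u$-variables, and parallelism of $\nu$ in the normal connection forces the conformal factor to depend on $u$ alone. The identification $\nu=-\nabla f/f$ drops out exactly as you say. You correctly flag the globalization as the genuine difficulty, and your proposed route (closedness of $\nu^\flat$, exactness from simple connectivity, then a de Rham-style holonomy argument) is the right shape; since you yourself note that you would ultimately cite \cite{hiepko} for this step, your proposal is in the end aligned with the paper's treatment, which simply invokes Hiepko throughout.
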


\begin{remark}\label{re:warped}
More precisely, as explained in \cite{hiepko} (see Eqs. (11) and (17) there),
under the assumptions of the above theorem,
every $x\in M$ has a neighbourhood $U$
and integral manifolds $N,N^\perp$ through $x$ of $\mc{D}$, $\mc{D}^\perp$, respectively, 
such that $U$ is diffeomorphic to $N^\perp\times N$
which maps $g|_U$ to $g|_{N^\perp}\oplus_f h$, where $h$ is a certain metric on $N$.
If $(M,g)$ is complete and simply connected, one may take $U=M$.
\end{remark}

\section{Presentation of the main results}\label{sec:main}

We now present the main global result of the present paper.

\begin{theorem}\label{th:main}
Let $(M,g)$ be a complete and simply connected Riemannian manifold.
For $c<0$, the rolling holonomy group $\mcH^c$ is reducible,
if and only if $(M,g)$ is a warped product either of the form
\begin{description}
\item[{\rm (WP1)}] $(\R\times M_1,\diff s^2\oplus_{e^{cs}} g_1)$, or

\item[{\rm (WP2)}] $( \FF^k_c\times M_1,{\bf g}_{k;c}\oplus_{\cosh(\sqrt{-c}\,d(\cdot))} g_1)$, where $1\leq k\leq n$ and for each $x\in \FF^k_c$ , $d(x)$ is the distance between $x$ and an arbitrary fixed point $x_0\in\FF^k_c$,
\end{description}
where $(M_1,g_1)$ is a complete simply connected Riemannian manifolds of lower dimension.
\end{theorem}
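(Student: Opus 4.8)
The plan is to translate the algebraic statement ``$\mcH^c$ is reducible'' into the existence of a proper nontrivial $\nabla^c$-parallel subbundle of $TM\oplus\R$, and then to feed an associated distribution on $M$ into Hiepko's criterion (Theorem~\ref{th:warped}). After the trivial rescaling that reduces matters to $c=-1$, the holonomy principle gives that $\mcH^c$ is reducible if and only if there is a smooth subbundle $0\neq V_1\subsetneq TM\oplus\R$ invariant under $\nabla^c$-parallel transport. The two computational facts I would record first are that, writing $e=(0,1)$, one has $\nabla^c_X e=(X,0)$, and that the curvature of $\nabla^c$ from~\eqref{eq:nabla_rol_explicit} is $R^{\nabla^c}(X,Y)(Z,s)=(R(X,Y)Z-c(g(Y,Z)X-g(X,Z)Y),0)$; thus it is valued in $TM\oplus\{0\}$ and measures exactly the deviation of $(M,g)$ from constant curvature $c$. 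Since $\nabla^c$ is metric for $h_c$, the restriction $h_c|_{V_1}$ has locally constant signature, and because $h_c$ has index one this produces the dichotomy the proof rests on: either $V_1$ can be chosen spacelike (no lightlike vector), or every invariant subspace is degenerate and $\mcH^c$ fixes a null line.

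For the converse direction I would, in each model, exhibit the parallel subbundle explicitly from the O'Neill connection formulas of the warped product. For (WP1), a direct computation with the metric $\diff s^2\oplus_{e^{cs}}g_1$ shows that the line spanned by $(\partial_s,\sqrt{-c})$ is $\nabla^c$-parallel and $h_c$-null, so $\mcH^c$ fixes a lightlike direction and is reducible. For (WP2) the fibre-tangent directions $TM_1$ lift to a $\nabla^c$-parallel spacelike subbundle, whose $h_c$-orthogonal complement is a parallel Lorentzian subbundle of signature $(k,1)$; either way $\mcH^c$ is reducible. In both cases $(M_1,g_1)$ is unconstrained, as required.

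The substance is the forward direction, where I build the distribution for Theorem~\ref{th:warped} out of $V_1$. In the spacelike case set $\mc{D}:=\pr_{TM}(V_1)$, which has rank $\dim V_1$ because $V_1$ contains no vector $(0,r)$ with $r\neq0$, and let $\mc{D}^\perp$ be its $g$-orthogonal complement. Parallelism of $V_1$ and $V_1^\perp$, combined with~\eqref{eq:nabla_rol_explicit}, should yield at once that $\mc{D},\mc{D}^\perp$ are integrable, that the leaves of $\mc{D}^\perp$ are totally geodesic, and that the leaves of $\mc{D}$ are umbilic with a normal mean-curvature section $\nu$ that is parallel for the normal connection, i.e. spherical. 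Theorem~\ref{th:warped} then exhibits $M$ globally as a warped product with totally geodesic base $N^\perp$ and spherical fibre $M_1$. To identify the base I would use that $V_1^\perp$ is a parallel Lorentzian subbundle together with the curvature formula above to show that the deviation tensor vanishes on base directions, so $N^\perp$ has constant sectional curvature $c$ and hence, being complete and simply connected, is isometric to $\FF^k_c$. The warping function is then recovered from $\nu=-\nabla f/f$: the spherical condition forces $\mathrm{Hess}\,f=-c\,f\cdot{\bf g}_{k;c}$ along the base, whose radial solution regular at the centre $x_0$ is $f=\cosh(\sqrt{-c}\,d(\cdot))$, giving (WP2). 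In the degenerate case I run the same scheme with the parallel null line $L$: here $h_c|_L$ is degenerate so no orthogonal splitting is available, but $\pr_{TM}(L)$ still defines a line field whose integral curves are, up to reparametrisation, geodesics; this line field is the totally geodesic base, so $N^\perp\cong\R$, its $g$-orthogonal complement carries the spherical fibre $M_1$, and solving $\nu=-\nabla f/f$ with the mean curvature dictated by nullity of $L$ yields $f=e^{cs}$, i.e. (WP1).

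The main obstacle I anticipate is precisely this forward passage: deriving Hiepko's three hypotheses (integrability, total geodesy, and the spherical condition with a normal-parallel umbilicity section) directly from the parallelism of $V_1$, and then pinning the base down exactly. Showing that the base is not merely totally geodesic but genuinely a space form of curvature $c$, and extracting the precise warping functions $e^{cs}$ and $\cosh(\sqrt{-c}\,d)$ from the Hessian/ODE forced by the spherical condition together with the correct boundary behaviour at $x_0$, is the delicate part. The degenerate case requires separate care, since the failure of the $h_c$-orthogonal splitting means the leaf geometry must be read off from the null line and its image in $TM$ rather than from a clean orthogonal decomposition.
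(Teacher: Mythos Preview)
Your architecture matches the paper's, but two concrete steps fail as written. First, a role reversal: with $V_1$ spacelike, a direct check from \eqref{eq:nabla_rol_explicit} shows that for sections $X,Y$ of $\mc{D}=\pr_{TM}(V_1)$ one has $\nabla_X Y\in\mc{D}$, so $\mc{D}$ is totally geodesic and is the base, not the fibre; it is $\mc{D}^\perp$ that is spherical. More seriously, the claim that ``the deviation tensor vanishes on base directions'' does not follow: parallelism of $V_1$ and $V_1^\perp$ only yields $R(X,Y)Z+B(X,Y)Z\in\mc{D}$ for $X,Y,Z\in\mc{D}$, not zero. The paper's route to constant curvature hinges on the distinguished vector $e=(0,1)$. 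Writing $e=(W_1,w_1)+(W_2,w_2)$ along the Lorentzian/spacelike splitting $V_1^\perp\oplus V_1$, one uses $R^{\nabla^{-1}}(\cdot,\cdot)e=0$ together with invariance of each summand to deduce $R(X,Y)W_\alpha=-B(X,Y)W_\alpha$. Completeness is then needed to show that $N:=\{x:(0,1)\in V_1^\perp|_x\}$ is nonempty; along base geodesics issuing from a point of $N$ the unit tangent is a multiple of $W_1$, so this curvature identity computes the Jacobi fields as $\sinh(t)$ times a parallel field, whence the base is $\HH^k$ and $f=\cosh(d(\cdot))$. Without singling out $e$, its components $W_\alpha$, and the set $N$, there is no mechanism to pass from ``totally geodesic leaf'' to ``constant curvature $c$''; your Hessian ODE for $f$ likewise needs the identity $R(X,W_\alpha)W_\alpha=-B(X,W_\alpha)W_\alpha$ as input.

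There is also a gap in your converse for (WP2): the fibre directions $TM_1\times\{0\}$ are not $\nabla^{-1}$-parallel, since for $X,Y\in TM_1$ one computes $\nabla^{-1}_X(Y,0)=(\nabla^{g_1}_X Y,0)+\tfrac{g(X,Y)}{\cosh^2 s}(W_1,w_1)$ with $(W_1,w_1)=\cosh(s)(-\sinh(s)\partial_s,\cosh(s))$, and the second term has nonzero $\R$-component. The invariant subbundle one must actually exhibit is the Lorentzian $\R(W_1,w_1)\oplus(TM_1\times\{0\})$; its spacelike $h$-complement is a graph over $T\HH^k$, not a lift of $TM_1$.
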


%\remember{Here we should add a sentence "$(M_1,g_1)$ is some complete simply connected Riemannian manifold" also to {\rm (WP2)} -case}

From the previous result one immediately deduces the following
characterization of complete controllability of the rolling problem against the hyperbolic space $\HH^n$.

\begin{corollary}
Let $(M,g)$ be a complete, oriented and simply connected Riemannian $n$-manifold rolling against the space form $(\HH^n,{\bf g}_{n;-1})$ of curvature $-1$.
Then the associated rolling problem is completely controllable if and only if $(M,g)$ is not isometric to a warped product of the form {\rm (WP1)} or {\rm (WP2)}.
\end{corollary}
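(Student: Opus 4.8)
The plan is to deduce the Corollary directly from Theorem~\ref{th:main} together with the results on the rolling connection $\nabla^{c}$ collected in Section~\ref{sec:notations}, specializing to the hyperbolic case $c=-1$. The essential link is the fact, quoted from~\cite{CK} at the end of Subsection~\ref{sec:reduction}, that the rolling problem $Q=Q(M,\HH^n)$ is completely controllable if and only if the rolling holonomy group $\mcH^{-1}$ equals the whole group $G_{-1}(n)=\SO_0(n,1)$. Thus the Corollary reduces to showing that $\mcH^{-1}=\SO_0(n,1)$ fails precisely when $(M,g)$ is a warped product of type {\rm (WP1)} or {\rm (WP2)}.

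First I would invoke Berger's classification (as cited in the introduction, see~\cite{berger,dSO}): the only connected subgroup of the Lorentz group acting \emph{irreducibly} on the Lorentzian space $\R^{n,1}$ is its identity component $\SO_0(n,1)$ itself. Since $\mcH^{-1}$ is by construction a connected subgroup of $\SO_0(n,1)$ (it is the holonomy group of the connection $\nabla^{-1}$, hence connected as $M$ is simply connected), this dichotomy says that either $\mcH^{-1}$ acts irreducibly on $TM\oplus\R$---in which case $\mcH^{-1}=\SO_0(n,1)$ and the system is controllable---or else the action of $\mcH^{-1}$ is reducible. In other words, \emph{non}-controllability is equivalent to reducibility of the action of $\mcH^{-1}$.

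Next I would apply Theorem~\ref{th:main} with $c=-1$: under the standing hypotheses that $(M,g)$ is complete and simply connected, reducibility of $\mcH^{-1}$ is equivalent to $(M,g)$ being isometric to a warped product of the form {\rm (WP1)} $(\R\times M_1,\diff s^2\oplus_{e^{-s}}g_1)$ or {\rm (WP2)} $(\HH^k\times M_1,{\bf g}_{k;-1}\oplus_{\cosh(d(\cdot))}g_1)$, since for $c=-1$ one has $\sqrt{-c}=1$ and $\FF^k_{-1}=\HH^k$. Chaining these equivalences gives that the rolling problem is completely controllable if and only if $(M,g)$ is \emph{not} such a warped product, which is the claim.

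The only genuine point requiring care---rather than a real obstacle---is bookkeeping the connectedness and the irreducibility/reducibility dichotomy so that Berger's theorem applies cleanly: one must be sure that the relevant notion of ``reducible action of $\mcH^{-1}$'' in Theorem~\ref{th:main} is the same Lorentzian-reducibility to which Berger's classification refers, namely the existence of a nontrivial proper $\mcH^{-1}$-invariant subspace $V_1\subset TM\oplus\R$ (possibly containing a lightlike vector, as flagged in the introduction). Since the hyperbolic space $\HH^n$ is itself complete, simply connected and oriented, the completeness hypothesis needed to pass from local to global warped-product structure in Theorem~\ref{th:warped} is automatically available, so no further reductions are needed and the Corollary follows immediately.
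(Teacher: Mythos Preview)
Your proof is correct and follows essentially the same route as the paper's: reduce controllability to $\mcH^{-1}=\SO_0(n,1)$ via~\cite{CK}, use simple connectedness of $M$ to get connectedness of $\mcH^{-1}$, invoke the Berger/Di~Scala--Olmos irreducibility criterion~\cite{berger,dSO}, and then apply Theorem~\ref{th:main}. The closing remark about completeness of $\HH^n$ is a slight muddle---it is the completeness of $(M,g)$, already in the hypotheses, that Theorem~\ref{th:main} requires---but this does not affect the argument.
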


\begin{proof}
With the notations of Theorem \ref{th:main} and Subsection \ref{sec:reduction}, studying the rolling problem reduces to determining the holonomy group $\mcH^{-1}$.  Assume that $(M,g)$ is of the form {\rm (WP1)} or {\rm (WP2)}, then $\mcH^{-1}$ is a proper subgroup of $\SO_0(n,1)$, i.e., the rolling problem is not controllable according to \cite{CK}. On the other hand, if $(M,g)$ is not of the form {\rm (WP1)} or {\rm (WP2)}, then the action of $\mcH^{-1}$ must be irreducible. Since $M$ is simply connected, then $\mcH^{-1}$ is connected, and thus it is a connected subgroup of $\SO(n,1)$. Therefore it equals $\SO_0(n,1)$, according to \cite{berger,dSO}.
\end{proof}

%\begin{remark}
%Let $(M,g)$ be a complete manifold. It is easy to see that if $\mcH^c$ is reducible, then the holonomy group corresponding to rolling of  the universal cover of $(M,g)$ onto $\FF^n_c$ is reducible as well. This implies that the previous result can be immediately generalized to non-simply connected complete manifolds.
%\end{remark}

%\remember{The global structure of $(M,g)$ in the case where it is not simply-connected
%is not necessarily of the form {\rm (WP1)} nor {\rm (WP2)}.
%We should, at least, need to replace $\FF^k_c$ with a hyperbolic space.}

\begin{remark}\label{rem:deRhamc=0}
Note that the statement of Theorem~\ref{th:main} extends to the case $c=0$, by setting the warping function equal to 1. Moreover, using an extra induction argument, one immediately recovers the de Rham decomposition theorem.
\end{remark}

%\remember{I wouldn't say this. Locally maybe no doubt. And don't forget that Hiepko's result is based on the very de Rham Theorem!!!}

%\begin{theorem}\label{th:main-global}
%If $(M,g)$ is complete and simply connected
%and if $H^{\Rol}$ is reducible,
%then $(M,g)$ is locally a doubly warped product
%$( \HH^{k}\times M_1,\red{{\bf g}_{k;-1}}\oplus_{\cosh} g_1)$
%or $(\R\times M_1,\diff s\oplus_{e^{-s}} g_1)$,
%where $(M_1,g_1)$ is some complete simply connected Riemannian manifold and $1\leq k\leq n$. 
%\end{theorem}

%%%%%%%%%%%%%%%%%%%%%%%%%
%%%%%%%%%%%%%%%%%%%%%%%%%
%%%%%%%%%%%%%%%%%%%%%%%%%
%%%%%%%%%%%%%%%%%%%%%%%%%

\subsection{Proof of the main result}

The study of reducibility of $\mc{H}^c$
in the case $c=0$ corresponds to the classical de Rham theorem, as mentioned in Remark~\ref{rem:deRhamc=0}, and for $c=1$ this was done in \cite{CK}. The rest of the paper is devoted to the proof of Theorem~\ref{th:main}. By rescaling, we may assume without loss of generality, that $c=-1$.
Theorem~\ref{th:main} is a consequence of the following two propositions.

\begin{proposition}\label{pr:main-local}
With the notation above, assume that the holonomy group $\mcH^{-1}$ is reducible.
Then $M$ is locally of one of the following forms:
\begin{itemize}
\item[{\rm (LW1)}] a warped product $(I\times M_1,\diff s^2\oplus_{e^{-s}} g_1)$,
where $I\subset\R$ is an interval;
\item[{\rm (LW2)}] a doubly warped product $(I\times M_1\times M_2,\diff s^2\oplus_{\sinh(s)}g_1 \oplus_{\cosh(s)} g_2)$;
\item[{\rm (LW3)}] a warped product $(O\times M_1, {\bf g}_{k;-1}\oplus_{\cosh(d(\cdot))} g_1)$,
where $O\subset \HH^k$ is a normal neighbourhood of a point $x_0\in O$
 and $d$ is a distance function from $x_0$ in $\HH^k$,
\end{itemize}
where $(M_1,g_1)$, $(M_2,g_2)$ are Riemannian manifolds of lower dimension. 
\end{proposition}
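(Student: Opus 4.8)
The plan is to exploit the reducibility of $\mcH^{-1}$ to produce a parallel distribution on the rolling bundle, and then translate that parallel structure, via the explicit form of $\nabla^{-1}$ in~\eqref{eq:nabla_rol_explicit}, into the differential-geometric conditions (i)--(iii) of Hiepko's Theorem~\ref{th:warped}. Since $\mcH^{-1}$ is reducible, there is a proper nontrivial subspace $V_1\subset T|_{x_0}M\oplus\R$ invariant under $\mcH^{-1}$; by the holonomy principle this extends to a $\nabla^{-1}$-parallel subbundle $V_1$ of $TM\oplus\R$, and its $h_{-1}$-orthogonal complement $V_1^\perp$ is parallel as well because $\nabla^{-1}$ is a metric connection. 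The whole argument then splits, exactly as the introduction announces, according to whether $V_1$ contains a lightlike vector for the index-one metric $h_{-1}$ or not; this dichotomy is precisely what produces the three local models (LW1)--(LW3).

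First I would set up the geometry of a parallel subbundle. Writing a section of $TM\oplus\R$ as a pair $(Y,s)$, the formula $\nabla^{-1}_X(Y,s)=(\nabla_X Y+sX,\,X(s)+g(Y,X))$ shows how the $TM$ and $\R$ components mix under parallel transport. The key computational device is the projection $\pr_{TM}:TM\oplus\R\to TM$; I would define a distribution $\mc{D}$ on $M$ essentially as the image (or the orthogonal behaviour) of $V_1$ under this projection, and analyse how the extra $\R$-factor enters. In the \textbf{spacelike/definite case} (no lightlike vector in $V_1$), both $V_1$ and $V_1^\perp$ meet the line factor $\R$ in a controlled way, and one extracts a vector $\nu$ on $M$ from the component of the distinguished section along $TM$; the parallelism of $V_1$ forces the second fundamental form of the integral leaves of $\mc{D}=\pr_{TM}(V_1)$ to be pure-trace of the form $\II(X,Y)=g(X,Y)\nu$ with $\nu$ normal-parallel, giving sphericity, while the complementary leaves come out totally geodesic. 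In the \textbf{lightlike case}, the null line inside $V_1$ is itself parallel, and parallel-transporting a null vector of $h_{-1}$ produces, through the mixing term in $\nabla^{-1}$, the exponential warping $e^{-s}$ of (LW1). Tracking the warping function is done via the last clause of Theorem~\ref{th:warped}, $\nu=-\nabla f/f$: solving this ODE along the $\mc{D}^\perp$-leaves yields $e^{-s}$, $\sinh s,\cosh s$, or $\cosh(\sqrt{-c}\,d(\cdot))$ depending on the sign of the induced curvature, which is where the space-form factors $\FF^k_{-1}=\HH^k$ and the doubly-warped $(\sinh s,\cosh s)$ structure materialise.

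Concretely I would organise the steps as: (1) produce the $\nabla^{-1}$-parallel splitting $TM\oplus\R=V_1\oplus V_1^\perp$ from reducibility; (2) compute $\pr_{TM}(V_1)$, $\pr_{TM}(V_1^\perp)$ and determine their ranks and mutual position, distinguishing the lightlike from the non-lightlike case by whether the restriction of $h_{-1}$ to $V_1$ is degenerate; (3) verify the integrability of the two projected distributions and show one family of leaves is totally geodesic and the other spherical, by differentiating the defining relations of $V_1$ and reading off the second fundamental form from the $X$-coefficient in~\eqref{eq:nabla_rol_explicit}; (4) apply Theorem~\ref{th:warped} to get a local warped product, and (5) integrate $\nu=-\nabla f/f$ to identify the warping function and the base factor, landing in exactly one of (LW1), (LW2), (LW3). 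I expect the \textbf{main obstacle} to be the bookkeeping in the lightlike case: because $h_{-1}$ has index one, a parallel null line does not give a metric splitting, so one cannot naively orthogonally decompose, and the sphericity/second-fundamental-form computation must be done with care to show the warping is genuinely $e^{-s}$ rather than a degenerate or inconsistent structure. A secondary delicate point is showing that the abstract base manifold arising from Hiepko's theorem is forced to be the space form $\HH^k$ (equivalently that its induced connection has constant curvature $-1$), which is what upgrades a generic warped product into the specific normal-coordinate form (LW3).
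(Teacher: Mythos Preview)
Your strategy is the paper's: produce a $\nabla^{-1}$-parallel subbundle $V_1$ with $h$-orthogonal $V_2=V_1^{\perp_h}$, split cases on the degeneracy of $h|_{V_1}$, project to $TM$, apply Hiepko, and identify the warping function. Two points need sharpening.

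The dichotomy is not whether ``$V_1$ contains a lightlike vector'' (any Lorentzian subspace does) but whether $V_1\cap V_2\neq\{0\}$, i.e.\ whether $h|_{V_1}$ is degenerate; you do state this correctly later. In the degenerate case $V_1\cap V_2=\R(L,1)$ is a parallel null line, and $\nabla^{-1}_X(L,1)\parallel(L,1)$ unwinds to $\nabla_X L=-X+g(X,L)L$, from which (LW1) follows directly.

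The more serious gap is in the non-degenerate case. You set $\mc{D}=\pr_{TM}(V_1)$ and claim its leaves are spherical while the complementary ones are totally geodesic, but this is backwards and incomplete. The projections $V_\alpha^M:=\pr_{TM}(V_\alpha)$ have \emph{totally geodesic} leaves, and they are not complementary: $V_1^M\cap V_2^M=\R W_1$, where $(0,1)=(W_1,w_1)+(W_2,w_2)$ is the splitting of the canonical section along $V_1\oplus V_2$. The spherical distributions are the strictly smaller $\mc{D}_\alpha:=(V_\beta^M)^\perp\subsetneq V_\alpha^M$. One then needs \emph{two} applications of Hiepko: first with the pair $(\mc{D}_1,V_2^M)$ to write $M$ locally as $O_2\times_{f_1} M_1$, then inside the totally geodesic leaf $O_2$ with the pair $(\mc{D}_2,\R W_2)$ to write $O_2\cong I\times_{f_2} M_2$; this is how the doubly-warped form (LW2) arises. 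The ODE $f_\alpha''=f_\alpha$ comes from the curvature identity $R(X,Y)W_\alpha=-B(X,Y)W_\alpha$ (forced by $R^{\nabla^{-1}}(\cdot,\cdot)(0,1)=0$ together with Ambrose--Singer), and the algebraic constraint $h((W_1,w_1),(W_2,w_2))=0$, combined with the signature of $h$ on $V_1,V_2$, pins down $(f_1,f_2)=(\cosh,\sinh)$ after normalisation---not ``the sign of the induced curvature''. Finally, whether one lands in (LW2) or (LW3) is decided by whether the base point lies in the singular locus $N_1=\{x:(0,1)\in V_1|_x\}$; there $W_1=0$, the scheme above degenerates, and one instead shows by a Jacobi-field computation that the $V_2^M$-leaf through $x$ has constant curvature $-1$, giving the $\HH^k$ factor.
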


%\begin{proposition}\label{pr:main-local}
%With the notation above, assume that the holonomy group $\mcH^{-1}$ is reducible. Then $M$ is locally a doubly warped product
%$(I\times M_1\times M_2,\diff s^2\oplus_{\sinh(s)}g_1 \oplus_{\cosh(s)} g_2)$
%or a warped product
%$(I\times M_1,\diff s^2\oplus_{e^{-s}} g_1)$,
%where $I\subset\R$ is a certain interval
%and $(M_1,g_1)$, $(M_2,g_2)$ are some Riemannian manifolds.
%\end{proposition}

\begin{proposition}\label{pr:converse}
Suppose $(M,g)$ is a doubly warped product
of one of the above forms {\rm (LW1)}, {\rm (LW2)} or {\rm (LW3)}.
Then the holonomy group $\mcH^{-1}$ is reducible.

%$(I\times M_1\times M_2,\diff s^2\oplus_{\sinh(s)}g_1 \oplus_{\cosh(s)} g_2)$
%or a warped product
%$(I\times M_1,\diff s^2\oplus_{e^{-s}} g_1)$,
%%$(I,\diff s)\times_{\sinh} (M_1,g_1)\times_{\cosh} (M_2,g_1)$,
%where $I\subset\R$ is a certain interval
%and $(M_1,g_1)$, $(M_2,g_2)$ are some Riemannian manifolds.
%Then the holonomy group $\mcH$ is reducible.
\end{proposition}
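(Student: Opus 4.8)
The plan is to prove the converse by explicitly computing the rolling connection $\nabla^{-1}$ on $TM\oplus\R$ in each of the three warped-product cases and exhibiting a proper nontrivial subbundle $V\subset TM\oplus\R$ that is parallel with respect to $\nabla^{-1}$; parallelism of a subbundle immediately forces the holonomy group $\mcH^{-1}$ to preserve the fiber $V|_{x_0}$, hence to act reducibly on $\R^{n,1}$. The key computational tool is the explicit formula \eqref{eq:nabla_rol_explicit}, namely $\nabla^{-1}_X(Y,s)=(\nabla_X Y+s X, X(s)+g(Y,X))$ (using $c=-1$), so that a section $(Y,s)$ is $\nabla^{-1}$-parallel along $X$ precisely when $\nabla_X Y=-sX$ and $X(s)=-g(Y,X)$. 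Everything reduces to recognizing that the warping structure produces exactly such sections, which is unsurprising since $\mcH^{-1}$ was designed in \cite{CK} to encode the geometry that $\RDist$ sees.

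First I would handle case {\rm (LW1)}, $(I\times M_1,\diff s^2\oplus_{e^{-s}}g_1)$. Writing $\doo_s$ for the unit vector field along $I$, the standard O'Neill warped-product formulas \cite{oneill83} give $\nabla_{\doo_s}\doo_s=0$ and, for $X$ tangent to the $M_1$-factor, $\nabla_{\doo_s}X=\nabla_X\doo_s=(\doo_s\log f)X=-X$ with $f=e^{-s}$. The natural candidate section to test is $(Y,s_0)=(\doo_s,1)$: one checks $\nabla_{\doo_s}\doo_s+1\cdot\doo_s=\doo_s$, which is \emph{not} zero, so instead the correct lightlike combination must be identified. The right object is the rank-one (or corank-one) subbundle spanned by a section built from $\doo_s$ together with the $\R$-direction; the warping exponent $e^{-s}$ is chosen precisely so that the pair $(\doo_s,\pm 1)$ satisfies both parallel equations, giving a lightlike invariant line $V_1$. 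I would verify $\nabla^{-1}_X(\doo_s,-1)=0$ for all $X$ by splitting $X$ into its $\doo_s$-component and $M_1$-component and applying the two scalar conditions above. This produces a $\nabla^{-1}$-parallel lightlike subbundle, so $\mcH^{-1}$ fixes a lightlike vector and is reducible.

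For case {\rm (LW3)}, $({\bf g}_{k;-1}\oplus_{\cosh(d(\cdot))}g_1)$ on $O\times M_1$ with $O\subset\HH^k$, the analogous computation uses that $f=\cosh(d(\cdot))$ is a radial function on hyperbolic space, whose Hessian is computed from the standard fact that $\nabla\nabla\, d$ and hence $\nabla\nabla\,\cosh d$ are explicitly diagonal in the polar frame (this is where the identity $\cosh''=\cosh$ and the Jacobi-field structure of $\HH^k$ enter). Here the invariant subbundle is spacelike of rank $k+1$: it is spanned by the sections $(\nabla\phi_i,\phi_i)$ coming from the coordinate functions $\phi_i=x_i$ restricting the ambient linear forms to $\HH^k\subset\R^{k,1}$, which are exactly the functions satisfying the Obata-type equation $\nabla\nabla\,\phi=\phi\, g$ on $\HH^k$; this equation is equivalent to $(\nabla\phi,\phi)$ being $\nabla^{-1}$-parallel. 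Case {\rm (LW2)}, the doubly warped $\diff s^2\oplus_{\sinh s}g_1\oplus_{\cosh s}g_2$, is then obtained as a hybrid: it is the $k=1$ instance of the {\rm (LW3)} mechanism along one factor combined with the exponential mechanism, and I would locate a proper invariant subbundle by the same two scalar equations, using $\sinh'=\cosh$, $\cosh'=\sinh$ to verify parallelism.

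The main obstacle I anticipate is not any single computation but organizing the three cases uniformly and correctly tracking the signature: the invariant subspace $V_1$ is lightlike in {\rm (LW1)} and {\rm (LW2)} but spacelike in {\rm (LW3)}, matching the dichotomy flagged in the introduction, and one must be careful that the O'Neill connection formulas are applied with the warping function on the correct factor (the metric is written $\diff s^2\oplus_f g_1$, i.e.\ the interval is the \emph{base} and $M_1$ the fiber). The cleanest route is to reduce all three to the single algebraic criterion ``$(Y,s)$ is $\nabla^{-1}$-parallel iff $\nabla_\cdot Y=-s\,(\cdot)^{\flat}$ and $\diff s=-Y^{\flat}$,'' then show in each case that the defining warping function is manufactured exactly so that the relevant radial/coordinate function solves the scalar ODE making such a section exist; once one parallel section (or parallel subbundle) is produced, reducibility of $\mcH^{-1}$ is immediate from the holonomy principle, with no further global argument needed since the claim here is purely local-to-fiber.
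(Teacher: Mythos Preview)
Your overall strategy---exhibit an explicit $\nabla^{-1}$-parallel proper subbundle of $TM\oplus\R$ in each case---is exactly what the paper does. However, several of your concrete claims are off. In {\rm (LW1)} the section $(\doo_s,-1)$ does \emph{not} work: for $X\in TM_1$ one gets $\nabla^{-1}_X(\doo_s,-1)=(-2X,0)$, which lies outside the line. The correct choice is $(\doo_s,+1)$, and one must accept that it spans a parallel \emph{line} rather than being a parallel section (equivalently, $e^{-s}(\doo_s,1)$ is a genuine parallel section). Your signature bookkeeping is also wrong: in the paper {\rm (LW2)} and {\rm (LW3)} both belong to the non-degenerate case $V_1\cap V_2=\{0\}$, with one invariant piece Lorentzian and the other spacelike; only {\rm (LW1)} corresponds to a lightlike invariant line. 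Your treatment of {\rm (LW2)} as a ``hybrid'' is too vague to constitute a proof; the paper instead writes down the invariant subbundle explicitly as $\R(W_1,w_1)\oplus(TM_1\times\{0\})$ with $(W_1,w_1)=\cosh(s)(-\sinh(s)\doo_s,\cosh(s))$ and checks parallelism by direct computation.

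Your approach to {\rm (LW3)} via Obata-type functions is genuinely different from the paper and attractive, but needs correction. A section $(\nabla\phi,-\phi)$ (note the sign) is $\nabla^{-1}$-parallel iff $\mathrm{Hess}\,\phi=\phi\,g$ on all of $M$, not just on $\HH^k$. Pulling back a coordinate function $x_i$ from $\HH^k$ to the warped product $O\times_{\cosh d}M_1$, the Hessian in the $M_1$-directions picks up the term $f^{-1}g(\nabla f,\nabla\phi)$, so the equation holds precisely when $g_{\HH^k}(\nabla x_{k+1},\nabla x_i)=x_{k+1}x_i$. This is true for the $k$ spacelike coordinates $x_1,\dots,x_k$ but \emph{fails} for $x_{k+1}=\cosh d$ itself, so you obtain a spacelike parallel subbundle of rank $k$, not $k+1$. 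With that fix your argument goes through. The paper takes a shorter route: it simply observes that in polar coordinates $(O\setminus\{x_0\},{\bf g}_{k;-1})\cong(]0,\epsilon[\times S^{k-1},\diff s^2\oplus_{\sinh s}{\bf g}_{k-1;1})$, so {\rm (LW3)} reduces to {\rm (LW2)} with $M_2=S^{k-1}$, and reducibility follows by continuity at $x_0$.
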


\begin{remark}
In the previous propositions, it is possible to replace  $(I\times M_1,\diff s^2\oplus_{e^{-s}} g_1)$ by $(-I\times M_1,\diff s^2\oplus_{e^s} g_1)$, since the map $(s,x_1)\mapsto(-s,x_1)$ provides an isometry between them.
\end{remark}

Note that both propositions are of local nature. Along the respective arguments, we will provide the necessary modifications to derive the full proof of Theorem \ref{th:main}

Before starting with the proofs, we need to introduce some more notations. The metric $h:=h_{-1}$ associated to the bundle $\pi_{TM\oplus\R}:TM\oplus\R\to M$ is then
\[
h((X,r),(Y,s))=g(X,Y)-rs,\quad (X,r),(Y,s)\in T|_x M\oplus\R.
\]
Moreover, the linear connection $\nabla^{-1}$ is given by
\[
\nabla^{-1}_X (Y,s)=(\nabla_X Y+sX,X(s)+g(X,Y)).
\]
for every $X,Y\in\VF(M),\ s\in\Cinf(M)$. In particular, if $\gamma$ is a unit speed geodesic on $M$
and $(Y(t),s(t))$ is parallel along $\gamma$, then
\[
\begin{cases}
\nabla_{\dot{\gamma}} Y+s\dot{\gamma}=0, \\
\dot{s}+g(\dot{\gamma},Y)=0.
\end{cases}
\]
Differentiating once more and simplifying we get
%\[
%\begin{cases}
%\nabla_{\dot{\gamma}}\nabla_{\dot{\gamma}} Y+\dot{s}\dot{\gamma}=0, \\
%\ddot{s}+g(\dot{\gamma},\nabla_{\dot{\gamma}} Y)=0,
%\end{cases}
%\]
%which simplifies to
\[
\begin{cases}
\nabla_{\dot{\gamma}}\nabla_{\dot{\gamma}} Y=g(\dot{\gamma},Y)\dot{\gamma}, \\
\ddot{s}-s=0.
\end{cases}
\]

\subsection{Proof of Proposition \ref{pr:main-local}}

In this section, we provide the proof of Proposition \ref{pr:main-local}
and the proof of the condition of necessity in Theorem \ref{th:main}.
The sufficiency for Theorem \ref{th:main} is proved in the next section.

Let $(V,h)$ be a Lorentzian vector space. For a vector subspace $W\subset V$, we define
\[
W^{\perp_h}=\{v\in V\ |\ h(v,w)=0,\forall w\in W\},
\]
the $h$-orthogonal space to $W$.
We will occasionally use a notation $\n{v}_h^2:=h(v,v)$, when $v\in V$.

 Let $V_1$ be a vector subbundle of $TM\oplus\R$ invariant under the holonomy
group $\mcH^{-1}$ of $\nabla^{-1}$
and set $V_2=V_1^{\perp_h}$.
This is again invariant under $\mcH^{-1}$, since $\nabla^{-1}$ is metric with respect to $h$.
Since $\dim (V_1\cap V_2)\in \{0,1\}$, the argument is divided into two cases.

\subsubsection{Case $V_1\cap V_2=\{0\}$} 

We have $TM\oplus\R=V_1\oplus V_2$.
For $\alpha=1,2$, define the subsets $N_\alpha$ of $M$ by
\[
N_\alpha=\{x\in M\ |\ (0,1)\in V_\alpha|_x\}.
\]
The restrictions of $h$ to $V_1$ and $V_2$
are both non-degenerate, and since $h$ has signature $(n,1)$, $h$ is positive definite on one of them,
which we assume without loss of generality to be $V_2$. Let us assume $h|_{V_2}$ has signature $(n-m,0)$, for some $m$ such that $0\leq m<n$.
Therefore $h$ is Lorentzian on $V_1$, i.e. $h|_{V_1}$ has signature $(m,1)$.
In particular, $V_1$ intersects transversally the light cone.
To this end, notice that since $\nabla^{-1}$ is a metric connection,
it preserves the signatures of invariant subbundles $V_1,V_2$ so the above claims are well established.

First we prove that $N_2$ is empty and $N_1$ is non-empty 
in the case where $M$ is complete.

\begin{lemma}\label{le:N1-N2}
One has $N_2=\emptyset$
and if $M$ is complete, then $N_1\neq \emptyset$.
\end{lemma}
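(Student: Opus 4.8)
The statement has two halves, of very different difficulty. The first, $N_2=\emptyset$, is immediate and uses neither completeness nor any ODE. Consider the canonical section $w:=(0,1)$ of $TM\oplus\R$; it satisfies $h(w,w)=g(0,0)-1=-1<0$, so $w|_x$ is $h$-timelike at every $x$. Since $h$ restricts to a \emph{positive definite} form on $V_2$, no timelike vector can lie in $V_2|_x$; hence $(0,1)\notin V_2|_x$ for all $x$, that is $N_2=\emptyset$.

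The second half, $N_1\neq\emptyset$, is the real content and where I expect the main obstacle. The plan is to exhibit an explicit point of $N_1$ by following a well-chosen geodesic, the key being a covariant second-order ODE satisfied by the $V_2$-part of $w$. Decompose $w=u_1+u_2$ with $u_\alpha\in\Gamma(V_\alpha)$ (possible since $TM\oplus\R=V_1\oplus V_2$), and let $\pi_1,\pi_2$ denote the $h$-orthogonal projections; since $V_1,V_2$ are $\nabla^{-1}$-parallel, the $\pi_\alpha$ commute with $\nabla^{-1}$. A direct computation from the formula for $\nabla^{-1}$ gives $\nabla^{-1}_X w=(X,0)$, and then along any unit-speed geodesic $\gamma$ one finds $\frac{D}{dt}w=(\dot\gamma,0)$ and $\frac{D^2}{dt^2}w=(\nabla_{\dot\gamma}\dot\gamma,\,g(\dot\gamma,\dot\gamma))=(0,1)=w$. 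Applying $\pi_2$ yields the crucial relation $\frac{D^2}{dt^2}u_2=u_2$ along every unit-speed geodesic, with initial derivative $\frac{D}{dt}u_2\big|_0=\pi_2(\dot\gamma(0),0)$. Because $\nabla^{-1}$ is metric and preserves $V_2$, on which $h$ is Euclidean, parallel transport trivializes $(V_2,h)$ along $\gamma$ to a fixed Euclidean space in which $u_2$ becomes $v(t)=\cosh(t)\,a+\sinh(t)\,b$ with $a=u_2(\gamma(0))$ and $b=\pi_2(\dot\gamma(0),0)$, and where $F(t):=\n{u_2}_h^2=\la v(t),v(t)\ra$. Observe that $x\in N_1$ is equivalent to $u_2|_x=0$, i.e. to $F(x)=0$.

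Now I would choose the geodesic direction so as to force $v$ through the origin at finite time. Fix any $p_0\in M$; if $p_0\in N_1$ we are done, so assume $a=u_2|_{p_0}\neq0$. As $h|_{V_1}$ is Lorentzian it contains a timelike vector $\tau$, and since $V_1\cap(TM\times\{0\})$ is positive definite (hence spacelike), the $\R$-component $\ell(\tau)$ of $\tau$ is nonzero; rescaling produces a timelike vector $(X,1)\in V_1|_{p_0}$, for which $\kappa:=g(X,X)=h((X,1),(X,1))+1$ satisfies $\kappa<1$, while $\kappa>0$ because $X=0$ would give $(0,1)\in V_1$, i.e. $p_0\in N_1$. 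From $(X,1)\in V_1$ we get $\pi_2(X,0)=-\pi_2(0,1)=-a$, so along the unit-speed geodesic $\gamma$ with $\dot\gamma(0)=X/\sqrt{\kappa}$ one has $b=-a/\sqrt{\kappa}$, whence $v(t)=\big(\cosh t-\tfrac{1}{\sqrt{\kappa}}\sinh t\big)\,a$. This vanishes at the finite time $t_0$ determined by $\tanh t_0=\sqrt{\kappa}$ (well defined since $0<\kappa<1$); completeness guarantees $\gamma$ extends to $[0,t_0]$, and $u_2(\gamma(t_0))=0$ shows $\gamma(t_0)\in N_1$.

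The main obstacle is precisely this $N_1$ part, and the nontrivial insight is twofold: recognizing the covariant equation $\frac{D^2}{dt^2}u_2=u_2$, which turns $u_2$ into an explicit hyperbolic-trigonometric curve in the Euclidean bundle $V_2$; and selecting the geodesic direction from a timelike vector of the Lorentzian fiber $V_1$ so that this curve is driven through the origin in finite geodesic time. Completeness enters only to ensure the geodesic reaches $t_0$. I would take care to verify the small nondegeneracies used above (that $V_1\not\subseteq TM\times\{0\}$, so $\ell|_{V_1}\neq0$, and that $\kappa>0$ away from $N_1$), but these follow directly from the signature hypotheses already recorded for $V_1$ and $V_2$.
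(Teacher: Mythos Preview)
Your proof is correct and follows the same strategy as the paper: pick a timelike vector in $V_1|_{p_0}$, launch the corresponding unit-speed geodesic, and exploit the hyperbolic ODE $\ddot y=y$ to hit $N_1$ at a finite time determined by $\tanh t_0=\sqrt{\kappa}$. The only difference is dual bookkeeping---the paper parallel-transports a unit timelike element $(X_0,r_0)\in V_1$ and shows it becomes $(0,1)$ at $t_1$ with $\tanh t_1=\n{X_0}_g/r_0$, whereas you track the $V_2$-projection $u_2=\pi_2(0,1)$ and show it vanishes---but under the rescaling $(X,1)=(X_0/r_0,1)$ one has $\sqrt{\kappa}=\n{X_0}_g/r_0$, so the two arguments trace the same geodesic to the same point of $N_1$.
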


\begin{proof}
The fact that $N_2=\emptyset$ is trivial 
because 
\[h((0,1),(0,1))=\|(0,1)\|_h^2=-1,\]
and $h$ is positive definite on $V_2$.

Suppose that $M$ is complete and fix $x_0\in M$.
Since $h$ is Lorentzian on $V_1$,
there is a $(X_0,r_0)\in V_1|_{x_0}$ such that $\n{(X_0,r_0)}^2_h<0$.
By scaling, we can assume that $\n{(X_0,r_0)}^2_h=-1$,
i.e., $\n{X_0}^2_g-r_0^2=-1$ and $r_0>0$.
%Moreover, we may assume that $r_0>0$.
%Then the $h$-orthonormal set $W_1$ of $(X_0,r_0)$
%in $V_1|_{x_0}$ is a spacelike i.e. $h$ is positive definite on $W_1$
%and $V_1|_{x_0}=\R(X_0,r_0)\oplus W_1$.
If $X_0=0$, then $r_0= 1$ and $ (0,1)\in V_1|_{x_0}$ and we are done.
Hence assume that $X_0\neq 0$.
Let $\gamma$ be a unit speed geodesic with velocity $X_0/\n{X_0}_g$ and write $(X(t),r(t))$
for the $\nabla^{-1}$-parallel transport of $(X_0,r_0)$ along $\gamma$.
Since $r(0)=r_0$ and $\dot{r}(0)=-g(\dot{\gamma}(0),X_0)=-\n{X_0}_g$,
and because $\ddot{r}-r=0$, we get
\[
r(t)=r_0\cosh(t)-\n{X_0}_g\sinh(t).
\]
Since  $\n{X_0}^2_g-r_0^2=-1$ and $r_0>0$, there exists a unique $t_1\in\R$
such that $(\cosh(t_1),\sinh(t_1))=(r_0,\n{X_0}_g)$.
Hence $r(t_1)=r_0^2-\n{X_0}_g^2=1$.
But then
\[
\n{X(t_1)}_g^2-1=&\n{X(t_1)}_g^2-r(t_1)^2=\n{(X(t_1),r(t_1))}^2_h\\
=&\n{(X_0,r_0)}^2_h=-1,
\]
which implies that $\n{X(t_1)}_g^2=0$
and hence $(0,1)=(X(t_1),r(t_1))\in V_1|_{\gamma(t_1)}$
i.e. $\gamma(t_1)\in N_1$. This finishes the proof.
\end{proof}

For $\alpha=1,2$, let $\pi_{V_\alpha}:=\pi_{TM\oplus\R}|_{V_\alpha}:V_\alpha\to M$ and
define smooth sections $(W_\alpha,w_\alpha)\in \Gamma(\pi_{V_\alpha})$,
such that at every point $x\in M$,
\[
(0,1)=(W_1,w_1)+(W_2,w_2).
\]
Clearly then $W_1=-W_2,\quad w_1+w_2=1$.

The fact $N_2=\emptyset$ means that $w_1$ never vanishes on $M$.
Indeed, if $w_1=0$ at some point, then $w_2=1$
and
\[
-1=\n{(0,1)}_h^2=\n{(W_1,0)}_h^2+\n{(W_2,1)}_h^2
=\n{W_1}_g^2+\n{W_2}_g^2-1,
\]
hence $W_1=0$, $W_2=0$ and $V_2\ni (W_2,w_2)=(0,1)$, a contradiction.

A simple calculation shows that the curvature $R^{\nabla^{-1}}$ of the rolling connection $\nabla^{-1}$ is given by
\[
R^{\nabla^{-1}}((X,r),(Y,s))(Z,u)=(R(X,Y)Z+B(X,Y)Z,0),
\]
where $B(X,Y)Z:=g(Y,Z)X-g(X,Z)Y$.

\begin{lemma}\label{le:curvature}
For all $x\in M$ and $X,Y\in T|_x M$, one has
\[
R(X,Y)W_\alpha=-B(X,Y)W_\alpha,\quad \alpha=1,2.
\]
\end{lemma}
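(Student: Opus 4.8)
The plan is to leverage the holonomy-invariance of $V_1$ and $V_2$. Since $V_\alpha$ is invariant under the holonomy group $\mcH^{-1}$ of $\nabla^{-1}$, it is a \emph{parallel} subbundle of $TM\oplus\R$; that is, $\nabla^{-1}_X\sigma\in\Gamma(V_\alpha)$ for every $\sigma\in\Gamma(V_\alpha)$ and every vector field $X$. The key consequence I will exploit is that the curvature endomorphism $R^{\nabla^{-1}}(X,Y)$, being assembled from iterated covariant derivatives, then maps each fiber $V_\alpha|_x$ into itself.

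Next I would note that the constant section $(0,1)$ is killed by the curvature. Substituting $(Z,u)=(0,1)$ into the explicit formula $R^{\nabla^{-1}}((X,r),(Y,s))(Z,u)=(R(X,Y)Z+B(X,Y)Z,0)$ recalled just above the lemma, and using $R(X,Y)0=0$ and $B(X,Y)0=0$, one gets $R^{\nabla^{-1}}(X,Y)(0,1)=(0,0)$. I then decompose $(0,1)=(W_1,w_1)+(W_2,w_2)$ according to the splitting $TM\oplus\R=V_1\oplus V_2$ and apply $R^{\nabla^{-1}}(X,Y)$ by linearity:
\[
(0,0)=R^{\nabla^{-1}}(X,Y)(W_1,w_1)+R^{\nabla^{-1}}(X,Y)(W_2,w_2).
\]
By the preservation property, the first summand lies in $V_1$ and the second in $V_2$. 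Because we are in the case $V_1\cap V_2=\{0\}$, the sum $TM\oplus\R=V_1\oplus V_2$ is direct, so a vector of $V_1$ and a vector of $V_2$ adding up to $0$ must each be $0$. Hence $R^{\nabla^{-1}}(X,Y)(W_\alpha,w_\alpha)=(0,0)$ for $\alpha=1,2$. Finally, evaluating this through the curvature formula, $R^{\nabla^{-1}}(X,Y)(W_\alpha,w_\alpha)=(R(X,Y)W_\alpha+B(X,Y)W_\alpha,0)$, and reading off the first component gives precisely $R(X,Y)W_\alpha=-B(X,Y)W_\alpha$.

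The only step needing genuine care, though it is a standard fact, is the claim that a holonomy-invariant subbundle is stable under the curvature operator; this, together with the fact that the decomposition $V_1\oplus V_2=TM\oplus\R$ is a true direct sum (which holds exactly because $V_1\cap V_2=\{0\}$ in the case at hand), is what forces each summand to vanish separately rather than merely cancel against one another. All remaining manipulations are immediate from the displayed formulas.
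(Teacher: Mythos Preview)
Your proof is correct and follows essentially the same approach as the paper's: compute $R^{\nabla^{-1}}(X,Y)(0,1)=0$, decompose $(0,1)=(W_1,w_1)+(W_2,w_2)$, use that each $V_\alpha$ is curvature-invariant, and conclude via the direct sum $V_1\oplus V_2$. The only cosmetic difference is that the paper invokes the Ambrose--Singer theorem to place $R^{\nabla^{-1}}(X,Y)$ in the holonomy Lie algebra, whereas you argue directly that a holonomy-invariant subbundle is parallel and hence curvature-stable; both justifications are standard and equivalent here.
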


\begin{proof}
Notice that for any $(X,r),(Y,s)\in T|_{(x,t)}(M\times\R)$ one has 
\[
R^{\nabla^{-1}}((X,r),(Y,s))(0,1)=(R(X,Y)0+B(X,Y)0,0)=(0,0).
\]
On the other hand, if ${\mathfrak{h}}^{-1}|_x$ denotes the Lie algebra of $\mcH^{-1}|_x$,
by the Ambrose-Singer theorem
$R^{\nabla^{-1}}((X,r),(Y,s))\in {\mathfrak{h}}^{-1}|_{x}$, so
\[
R^{\nabla^{-1}}((X,r),(Y,s))V_\alpha|_{x}\subset V_\alpha|_{x},\quad \alpha=1,2.
\]
Hence
\[
(0,0)=&R^{\nabla^{-1}}((X,r),(Y,s))(0,1) \\
=&\underbrace{R^{\nabla^{-1}}((X,r),(Y,s))(W_1,w_1)}_{\in V_1}+\underbrace{R^{\nabla^{-1}}((X,r),(Y,s))(W_2,w_2)}_{\in V_2},
\]
because $(W_\alpha,w_\alpha)\in V_\alpha$, $\alpha=1,2$.
Therefore, since $V_1\cap V_2=\{0\}$, we have
\[
R^{\nabla^{-1}}((X,r),(Y,s))(W_\alpha,w_\alpha)=(0,0),\quad \alpha=1,2,
\]
which means that
\[
R(X,Y)W_\alpha+B(X,Y)W_\alpha=0,\quad \alpha=1,2,
\]
and hence the claim has been established.
\end{proof}

Define for every $x\in M$,
\[
V_\alpha^M|_x:=\{X\ |\ (X,r)\in V_\alpha\}\subset T|_x M,\quad \alpha=1,2.
\]
Clearly $V_\alpha^M$ is a smooth distribution on $M\setminus N_\alpha$
with $\rank V_\alpha^M=\rank V_\alpha$.
In particular, $V_2^M$ is a smooth constant rank distribution on all of $M$, since $N_2=\emptyset$.
Moreover, it is clear that $V_1^M$ is a smooth non-constant rank distribution
so that $\rank V_1^M=\rank V_1-1=m$ at points $x\in N_1$.

\begin{lemma}\label{le:V1McapV2M}
For every $x\in M$,
the intersection $V_1^M\cap V_2^M$ is spanned
by $W_1$ ($=-W_2$)
and so is one dimensional on $M\setminus N_1$ and zero on $N_1$.
\end{lemma}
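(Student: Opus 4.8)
The plan is to establish the fibrewise equality $V_1^M \cap V_2^M = \spn\{W_1\}$ by a purely linear-algebraic argument, using only the direct sum decomposition $TM\oplus\R = V_1 \oplus V_2$ together with the defining relation $(0,1) = (W_1,w_1) + (W_2,w_2)$, and then to read off the rank by separating the points of $N_1$ from the rest. First I would verify the inclusion $\spn\{W_1\} \subseteq V_1^M \cap V_2^M$: since $(W_1,w_1)$ is a section of $V_1$, its $TM$-component $W_1$ lies in $V_1^M$; and since $W_1 = -W_2$ with $(W_2,w_2)$ a section of $V_2$, the same vector lies in $V_2^M$. As both $V_\alpha^M$ are linear subspaces, this already gives the whole line $\spn\{W_1\}$.

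For the reverse inclusion, I would take $X \in V_1^M \cap V_2^M$ and choose $r_1, r_2 \in \R$ with $(X,r_1)\in V_1$ and $(X,r_2)\in V_2$. Their difference $(X,r_1)-(X,r_2) = (0,r_1-r_2)$ then carries the obvious $V_1\oplus V_2$ decomposition with components $(X,r_1)\in V_1$ and $-(X,r_2)\in V_2$. On the other hand, scaling the relation $(0,1)=(W_1,w_1)+(W_2,w_2)$ by $r_1-r_2$ produces a second such decomposition of the same vector. Uniqueness of the direct-sum splitting forces $(X,r_1) = (r_1-r_2)(W_1,w_1)$, whence $X = (r_1-r_2)W_1 \in \spn\{W_1\}$, establishing the claimed equality at every point.

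It then remains to compute the rank, i.e. to decide where $W_1$ vanishes. On $N_1$ the vector $(0,1)$ lies in $V_1$, so comparing with the unique decomposition $(0,1)=(W_1,w_1)+(W_2,w_2)$ forces $(W_1,w_1)=(0,1)$ and in particular $W_1=0$, so the intersection is trivial there. On $M\setminus N_1$ I would argue by contradiction: if $W_1=0$ at such a point, then $(0,w_1)\in V_1$, and since $w_1$ never vanishes --- a consequence of $N_2=\emptyset$ already established above --- dividing by $w_1$ would give $(0,1)\in V_1$, placing the point in $N_1$, a contradiction. Hence $W_1\neq 0$ and the intersection is exactly the line $\spn\{W_1\}$. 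The only genuinely substantive input is the non-vanishing of $w_1$; everything else is bookkeeping with the uniqueness of the splitting, so I do not anticipate a real obstacle here.
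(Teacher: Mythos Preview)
Your proof is correct and follows essentially the same route as the paper: both arguments exploit uniqueness of the direct-sum splitting $V_1\oplus V_2$ applied to the vector $(0,r_1-r_2)=(r_1-r_2)(0,1)$ to force $X=(r_1-r_2)W_1$, and both note $W_1=-W_2$ for the easy inclusion. Your treatment is in fact slightly more explicit than the paper's in justifying the rank claim (i.e.\ in verifying that $W_1$ vanishes precisely on $N_1$, using the non-vanishing of $w_1$), which the paper leaves to the reader.
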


\begin{proof}
Indeed, if $X\in V_1^M\cap V_2^M$,
then there are $r_1,r_2\in\R$ such that
$(X,r_\alpha)\in V_\alpha$, $\alpha=1,2$. But then
one has
\[
\underbrace{(X,r_1)}_{\in V_1}-\underbrace{(X,r_2)}_{\in V_2}&=(0,r_1-r_2)=(r_1-r_2)(0,1)\\
&=(r_1-r_2)\underbrace{(W_1,w_1)}_{\in V_1}+(r_1-r_2)\underbrace{(W_2,w_2)}_{\in V_2},
\]
and since $V_1\cap V_2=\{0\}$,
one has
\[
(r_1-r_2)(W_1,w_1)=(X,r_1), \\
(r_2-r_1)(W_2,w_2)=(X,r_2).
\]
In particular, $X=(r_1-r_2)W_1$,
which shows that $V_1^M\cap V_2^M\subset \R W_1$.
Finally, since $W_1\in V_1^M$, $W_2\in V_2^M$ and $W_1=-W_2$,
we have that $\R W_1\subset V_1^M\cap V_2^M$.
\end{proof}

Define $\mc{D}_1:=(V_2^M)^{\perp}$ and $\mc{D}_2:=(V_1^M)^\perp$
i.e. the orthogonal complements of $V_2$ and $V_1$ with respect to $g$.
Notice that $\mc{D}_\alpha\subset V_\alpha$ for $\alpha=1,2$.
Since $V_2^M$ is a smooth constant rank distribution on $M$
then so is $\mc{D}_1$ as well and $\rank \mc{D}_1=m$.
Similarly, $\mc{D}_2$ has constant rank $n-m-1$ on $M\setminus N_1$
and rank $n-m$ on $N_1$.
It is obvious that $\mc{D}_2$ is a smooth distribution on $M\setminus N_1$.
However, it is not continuous at points of $x\in N_1$.
Indeed, as will be proved in Lemma \ref{le:intDi} below,
$N_1$ is a submanifold of $M$ positive codimension
(indeed it has dimension $m$).
But the rank of a continuous distribution is lower semicontinuous
and hence can only locally increase,
while $\mc{D}_2$ has rank $n-m$ on the nowhere dense set $N_1$
which is higher than its rank $n-m-1$ on $M\setminus N_1$,
so $\mc{D}_2$ cannot be continuous at points of $N_1$.

\begin{lemma}\label{le:intDi}
Let $\{\alpha,\beta\}=\{1,2\}$.
The distribution $\mc{D}_{\alpha}$ is integrable on $M\setminus N_{\beta}$
and the set $N_1$ is an integral manifold of $\mc{D}_1$ which is embedded in $M$.
Moreover, if $O$ is an integral manifold of $\mc{D}_{\alpha}$ 
and if $\II_O$ is its second fundamental form, then
for every $X,Y\in T|_x O$, $x\in M\setminus N_{\beta}$,
\[
\II_O(X,Y)=\frac{g(X,Y)}{w_\alpha}W_{\alpha}.
\]
In particular, each integral manifold of $\mc{D}_{\alpha}$ is umbilical
and $N_1$ is totally geodesic
\end{lemma}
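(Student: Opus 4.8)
The plan is to prove the three assertions of Lemma~\ref{le:intDi} in turn, relying on the curvature identity of Lemma~\ref{le:curvature} and on the algebraic description of $V_1^M\cap V_2^M$ from Lemma~\ref{le:V1McapV2M}. Throughout I would work on $M\setminus N_\beta$, where $\mc{D}_\alpha$ has constant rank and is genuinely smooth, and treat the statements about $N_1$ separately.

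\textbf{Integrability of $\mc{D}_\alpha$.} First I would observe that $\mc{D}_\alpha=(V_\beta^M)^\perp$ is the $g$-orthogonal complement of $V_\beta^M$, and that $\mc{D}_\alpha\subset V_\alpha^M$ by the inclusion $\mc{D}_\alpha\subset V_\alpha$ noted before the lemma. To check integrability I would use the Frobenius criterion: given two sections $X,Y$ of $\mc{D}_\alpha$, I must show $[X,Y]\in\mc{D}_\alpha$, i.e. $g([X,Y],Z)=0$ for every section $Z$ of $V_\beta^M$. The natural route is to express this bracket condition via the Levi--Civita connection as $g(\nabla_X Y-\nabla_Y X,Z)=0$ and to feed in the defining relations for $(W_\alpha,w_\alpha)$ coming from $\nabla^{-1}$-invariance. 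Concretely, the condition $(W_\alpha,w_\alpha)\in\Gamma(V_\alpha)$ and the invariance of $V_\alpha$ under parallel transport give $\nabla^{-1}$-relations that, read off componentwise using $\nabla^{-1}_X(Y,s)=(\nabla_X Y+sX,X(s)+g(X,Y))$, translate into formulas for $\nabla_X W_\alpha$ and $X(w_\alpha)$ in terms of $W_\alpha,w_\alpha$. These are exactly the ingredients needed to control $\nabla_X Y$ modulo $\mc{D}_\alpha$.

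\textbf{The umbilicity formula.} For an integral manifold $O$ of $\mc{D}_\alpha$, the second fundamental form is $\II_O(X,Y)=\bigl(\nabla_X Y\bigr)^{\perp}$, the component normal to $O$ (equivalently, lying in $V_\beta^M=\mc{D}_\alpha^{\perp}$). So I would compute $\nabla_X Y$ for $X,Y$ tangent to $O$ and project onto $V_\beta^M$. Since $\dim(V_1^M\cap V_2^M)$ is spanned by $W_\alpha$ (Lemma~\ref{le:V1McapV2M}) and $\mc{D}_\alpha\subset V_\alpha^M$, the relevant normal direction should come out proportional to $W_\alpha$. The claimed coefficient $g(X,Y)/w_\alpha$ strongly suggests that differentiating the relation $(0,1)=(W_1,w_1)+(W_2,w_2)$ along $X$ and using the explicit form of $\nabla^{-1}$ yields $\nabla_X W_\alpha = (\text{something})\,X + \dots$ and $X(w_\alpha)=-g(X,W_\alpha)$, whose combination forces the normal part of $\nabla_X Y$ to equal $\frac{g(X,Y)}{w_\alpha}W_\alpha$. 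The factor $g(X,Y)$ (rather than a general bilinear form) is precisely what makes $O$ umbilical, and I expect it to fall out of the symmetry of $\nabla_X W_\alpha$ combined with $X,Y\perp W_\beta$ on $O$.

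\textbf{The set $N_1$.} Finally I would handle $N_1=\{x:(0,1)\in V_1|_x\}=\{w_2=0\}$ (equivalently $W_\alpha=0$). That $N_1$ is an embedded integral manifold of $\mc{D}_1$ should follow by showing $W_1$ (hence $w_2$) cuts out $N_1$ transversally using the derivative formula for $W_1$, so that $N_1$ is a level set of a submersion of the expected dimension $m$; its tangent space then coincides with $\mc{D}_1$. Total geodesy of $N_1$ is immediate from the umbilicity formula once $W_1=0$ there: the second fundamental form $\frac{g(X,Y)}{w_1}W_1$ vanishes identically on $N_1$. The main obstacle I anticipate is the bookkeeping in the integrability/umbilicity step: correctly extracting the connection identities for $\nabla_X W_\alpha$ and $X(w_\alpha)$ from $\nabla^{-1}$-invariance of $V_\alpha$, and verifying that the mixed terms cancel so that the normal component is symmetric and pointwise proportional to $g(X,Y)$. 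The behaviour at $N_1$, where $\mc{D}_2$ degenerates, must be argued by the transversality/level-set description rather than by the umbilicity formula, which breaks down there since $w_1\to$ its boundary value.
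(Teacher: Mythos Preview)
Your general framework (exploit $\nabla^{-1}$-invariance of $V_\alpha$) is correct, but you are missing the one observation that makes the whole proof immediate, and this causes your plan to take detours that are either unnecessary or, in one place, actually fail.

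The key fact you do not state is the equivalence
\[
X\in\mc{D}_\alpha=(V_\beta^M)^\perp \quad\Longleftrightarrow\quad (X,0)\in V_\alpha,
\]
which follows at once from $h((X,0),(Z,s))=g(X,Z)$ and $V_\alpha=V_\beta^{\perp_h}$. With this in hand, for $X,Y$ tangent to $\mc{D}_\alpha$ one has $(Y,0)\in V_\alpha$, hence
\[
\nabla^{-1}_X(Y,0)=(\nabla_X Y,\,g(X,Y))\in V_\alpha,
\]
and subtracting the symmetric expression gives $([X,Y],0)\in V_\alpha$, i.e.\ integrability. For the second fundamental form, one rewrites $(\nabla_X Y,g(X,Y))=(\nabla_X Y,0)+g(X,Y)(W_\alpha,w_\alpha)+g(X,Y)(W_\beta,w_\beta)$ and reads off that $(w_\alpha\nabla_X Y-g(X,Y)W_\alpha,0)\in V_\alpha$, so $w_\alpha\nabla_X Y-g(X,Y)W_\alpha\in\mc{D}_\alpha$, which is exactly the umbilicity formula. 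No curvature is used anywhere; your announced reliance on Lemma~\ref{le:curvature} is a red herring, and your plan to first derive formulas for $\nabla_X W_\alpha$ and $X(w_\alpha)$ is an unnecessary detour (in particular, the identity $X(w_\alpha)=-g(X,W_\alpha)$ does not follow from what you wrote; $\nabla^{-1}_X(W_\alpha,w_\alpha)\in V_\alpha$ only says the pair lies in $V_\alpha$, not that the second component vanishes).

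There is a genuine gap in your treatment of $N_1$. You propose to cut out $N_1$ as $\{w_2=0\}$, but at any $x\in N_1$ one has $dw_2|_x=0$: for $X\in\mc{D}_1|_x$ the $V_2$-component of $(X,0)=\nabla^{-1}_X(0,1)$ vanishes, and for $X\in V_2^M|_x$ (where $(X,0)\in V_2$ since all $s_i$ vanish on $N_1$) the $V_1$-component vanishes, so $X(w_1)=0$ and hence $X(w_2)=0$. Thus $w_2$ alone is never a submersion along $N_1$ and cannot realize $N_1$ as a codimension-$(n-m)$ level set. The paper instead uses the full section $(W_2,w_2)$ of $V_2$: choosing a local $h$-orthonormal frame $(Y_i,s_i)$ of $V_2$ and setting $F=\bigl(h((Y_i,s_i),(0,1))\bigr)_i=(-s_i)_i$, one checks $F_*|_x(Y_k)=e_k$ on $N_1$, so $F$ is a submersion with $F^{-1}(0)=N_1$ and $\ker F_*|_x=\mc{D}_1|_x$. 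Equivalently, one may view $N_1$ as the zero locus of the section $(W_2,w_2)\in\Gamma(V_2)$ and verify that $X\mapsto\nabla^{-1}_X(W_2,w_2)$ is surjective onto $V_2|_x$ with kernel $\mc{D}_1|_x$ at points of $N_1$; but the scalar $w_2$ by itself is not enough.
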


\begin{proof}
Recall that $\mc{D}_\alpha=(V_{\beta}^M)^\perp$
and $TM\oplus\R=V_\alpha\oplus V_{\beta}$.
Suppose $X,Y$ are vector fields tangent to $\mc{D}_\alpha$ on $M\setminus N_{\beta}$.
Then
\[
\{0\}=g(\{Y\}\times V_{\beta}^M)=h(\{(Y,0)\}\times V_{\beta}),
\]
so $(Y,0)\in V_\alpha$, and similarly $(X,0)\in V_\alpha$. 
Hence
\[
V_\alpha\ni \nabla^{-1}_X (Y,0)=(\nabla_X Y,g(X,Y)).
\]
Similarly, $(\nabla_Y X,g(Y,X))\in V_\alpha$ and thus
\[
([X,Y],0)=(\nabla_X Y,g(X,Y))-(\nabla_Y X,g(Y,X))\in V_\alpha.
\]
Therefore
\[
g(\{[X,Y]\}\times V_{\beta}^M)=h(\{([X,Y],0)\}\times V_{\beta})=0,
\]
so $[X,Y]$ is tangent to $(V_{\beta}^M)^{\perp}=\mc{D}_\alpha$.
This proves that $\mc{D}_\alpha$ is involutive and hence integrable on $M\setminus N_{\beta}$.

Let $O$ be an integral manifold of $\mc{D}_{\alpha}$ in $M\setminus N_{\beta}$
and let $X,Y$ be tangent to $O$.
By what we have shown above,
\[
V_\alpha\ni \nabla^{-1}_X (Y,0)=&(\nabla_X Y,g(X,Y))
=(\nabla_X Y,0)+g(X,Y)(0,1) \\
=&(\nabla_X Y,0)+g(X,Y)(W_\alpha,w_\alpha)+g(X,Y)(W_{\beta},w_{\beta}) \\
=&(\nabla_X Y+g(X,Y)W_{\beta},g(X,Y)w_{\beta})+g(X,Y)(W_\alpha,w_\alpha),
\]
and so
\[
(\nabla_X Y+g(X,Y)W_{\beta},g(X,Y)w_{\beta})\in V_\alpha.
\]
Since also $w_{\beta}(\nabla_X Y,g(X,Y))\in V_\alpha$, it follows that
\[
((1-w_{\beta})\nabla_X Y+g(X,Y)W_{\beta},0)\in V_\alpha,
\]
and hence, since $1-w_{\beta}=w_\alpha$ and $W_{\beta}=-W_\alpha$,
\[
0=&h(\{(w_\alpha\nabla_X Y-g(X,Y)W_\alpha,0)\}\times V_{\beta})\\
=&g(\{w_\alpha\nabla_X Y-g(X,Y)W_\alpha\}\times V_{\beta}^M).
\]
Thus $w_\alpha\nabla_X Y-g(X,Y)W_\alpha\in \mc{D}_\alpha$.
Since $W_\alpha=-W_{\beta}\in V_{\beta}^M=\mc{D}_\alpha^\perp$, this 
proves that
\[
\II_O(X,Y)=\frac{g(X,Y)}{w_\alpha}W_\alpha.
\]

We show that $N_1$ is an integral manifold of $\mc{D}_1$.
Indeed, let $x_1\in N_1$ and let $(Y_i,s_i)$, $i=1,\dots,n-m$,
be a local basis of $V_2$ on an open set $U\ni x_1$.
%Since at $x_1$ the vector space $V_2$ is space-like
%with respect to the inner product $h$,
%it follows that $U$ can be chosen small enough so that $V_2$
%is space-like on $U$.
Since $h$ is positive definite on $V_2$,
we may assume that the basis $(Y_i,s_i)$, $i=1,\dots,n-m$
is $h$-orthonormal.
Moreover, if $x\in N_1$, then for all $i$, $s_i(x)=-h((0,1),(Y_i|_x,s_i(x)))=0$
since $(0,1)\in V_1|_x$.

Define $F:U\to\R^{n-m}$ by
\[
F=\big(h((Y_1,s_1),(0,1)),\dots,h((Y_{n-m},s_{n-m}),(0,1))\big),
\]
and notice that $F^{-1}(0)=N_1\cap U$. To show that $N_1$
is a smooth embedded submanifold of dimension $m$, it thus suffices to show that
$F$ is a submersion at every point $x\in N_1\cap U$.
But if $x\in N_1\cap U$ and $k=1,\dots,n-m$, then
\[
F_*|_x(Y_k)=&\big(h(\nabla^{-1}_{Y_k|_x}(Y_i,s_i),(0,1))+h((Y_i|_x,s_k(x)),\nabla^{-1}_{Y_k}(0,1))\big)_{i=1}^{n-m},
\]
because $\nabla^{-1}$ is metric with respect to $h$.
Since $\nabla^{-1}_{Y_k|_x}(Y_i,s_i)\in V_2|_x$, and $(0,1)\in V_1|_x$,
the term $h(\nabla^{-1}_{Y_k|_x}(Y_i,s_i),(0,1))$ vanishes.
Moreover
\[
h((Y_i|_x,s_k(x)),\nabla^{-1}_{Y_k}(0,1))&=h((Y_i|_x,s_i(x)),(Y_k|_x,0))\\
&=g(Y_i|_x,Y_k|_x)=\delta_{ik},
\]
since $s_i(x)=s_k(x)=0$.
Hence if $e_i$, $i=1,\dots,n-m$, is the canonical basis of $\R^{n-m}$,
then for all $x\in N_1\cap U$, $F_*|_x(Y_k)=e_k$, $k=1,\dots,n-m$
and so they are linearly independent. Hence $F$ is a submersion at every point of $N_1\cap U$.

To show that $T|_x N_1=\mc{D}_1|_x$ for all $x\in U\cap N_1$,
notice that if $X\in \mc{D}_1|_x$, then by computation as above,
\[
%F_*|_x(X)=\big(h(\nabla^{-1}_{X}(Y_i,s_i),(0,1))+h((Y_i|_x,s_k(x)),(X,0))\big)_{i=1}^{n-m}=0,
F_*|_x(X)=\big(h(\nabla^{-1}_{X}(Y_i,s_i),(0,1))+g(Y_i|_x,X)\big)_{i=1}^{n-m}=0,
\]
because $\nabla^{-1}_{X}(Y_i,s_i)\in V_2|_x$, $(0,1)\in V_1|_x$
and $Y_i|_x\in V_2^M|_x$ while $X\in \mc{D}_1|_x=(V_2^M|_x)^\perp$.
%and $(X,0)\in V_1|_x$.
This shows that $\mc{D}_1|_x\subset T|_x N_1$ for all $x\in N_1\cap U$
and since both linear spaces have dimension $m$,
we have the equality i.e. $N_1$ is an integral manifold of $\mc{D}_1$.

%\remember{More elegant way of proving that $N_1$ is an integral manifold of $\mc{D}_1$ ?}

Finally, since $N_1$ is an integral manifold of $\mc{D}_1$
and since $(W_1,w_1)=(0,1)$ on $N_1$,
one has that the second fundamental form $\II_{N_1}$
vanishes on $N_1$. Therefore $N_1$ is totally geodesic.
\end{proof}

In particular,  at every $x\in N_1$ one has $V_1^M|_x=\mc{D}_1|_x=T|_x N_1$.

\begin{lemma}\label{le:spherical}
Let $\{\alpha,\beta\}=\{1,2\}$.
The integral manifolds of $\mc{D}_\alpha$ in $M\setminus N_\beta$ are spherical.
\end{lemma}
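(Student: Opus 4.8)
The goal is to show that each integral manifold of $\mc{D}_\alpha$ in $M\setminus N_\beta$ is spherical in the sense of Definition~\ref{def:spherical}. We already know from Lemma~\ref{le:intDi} that such an integral manifold $O$ is umbilical with
\[
\II_O(X,Y)=\frac{g(X,Y)}{w_\alpha}W_\alpha,
\]
so the natural candidate for the distinguished normal section is
\[
\nu:=\frac{1}{w_\alpha}W_\alpha.
\]
The first condition in Definition~\ref{def:spherical} is then immediate, since $\II_O(X,Y)=g(X,Y)\nu$ by the displayed umbilicity formula; note $W_\alpha=-W_\beta\in V_\beta^M=\mc{D}_\alpha^\perp$ is indeed normal to $O$, and $w_\alpha$ never vanishes (for $\alpha=1$ this is the consequence of $N_2=\emptyset$ shown above, and for $\alpha=2$ it holds on $M\setminus N_1$ by symmetry). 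The entire content of the lemma thus reduces to verifying the second, parallelism condition: $\nabla_X\nu\in TO$ for every $X$ tangent to $O$, equivalently that the normal component of $\nabla_X\nu$ vanishes.

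The plan is to compute $\nabla_X\nu$ directly and project onto the normal bundle $\mc{D}_\alpha^\perp=V_\beta^M$. I would work inside the bundle $TM\oplus\R$ and exploit that $V_\beta$ is $\nabla^{-1}$-invariant. The key computation is to differentiate the identity $(0,1)=(W_\alpha,w_\alpha)+(W_\beta,w_\beta)$ with $\nabla^{-1}$. Since $\nabla^{-1}_X(0,1)=(X,0)$, splitting into the $V_\alpha$ and $V_\beta$ summands and using that each summand is $\nabla^{-1}$-invariant, one obtains two relations governing $\nabla^{-1}_X(W_\alpha,w_\alpha)$ and $\nabla^{-1}_X(W_\beta,w_\beta)$. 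Writing out the connection formula
\[
\nabla^{-1}_X(W_\alpha,w_\alpha)=(\nabla_X W_\alpha+w_\alpha X,\,X(w_\alpha)+g(X,W_\alpha)),
\]
and recalling that for $X\in TO$ we have $g(X,W_\alpha)=0$ (as $X$ is tangent to $O$ and $W_\alpha$ normal), these relations should pin down both $\nabla_X W_\alpha$ up to its $V_\beta^M$-component and the derivative $X(w_\alpha)$. From $\nu=W_\alpha/w_\alpha$ one then has $\nabla_X\nu=(w_\alpha\nabla_X W_\alpha-X(w_\alpha)W_\alpha)/w_\alpha^2$, and I would substitute the information obtained to show the normal part cancels.

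The step I expect to be the main obstacle is extracting the normal component of $\nabla_X W_\alpha$ cleanly: a priori $\nabla_X W_\alpha$ has both a tangential ($\mc{D}_\alpha$) and a normal ($V_\beta^M$) part, and I must show the normal part is exactly a multiple of $W_\alpha$ itself so that it is killed when subtracting $X(w_\alpha)W_\alpha$. The route I would take is to test $\nabla_X\nu$ against an arbitrary section of $V_\beta^M$ using the metric $h$ on $TM\oplus\R$: since the lift $(W_\alpha,w_\alpha)$ lies in $V_\alpha=V_\beta^{\perp_h}$ and $\nabla^{-1}$ is $h$-metric, the $h$-inner product of $\nabla^{-1}_X(W_\alpha,w_\alpha)$ with any section $(Z,u)\in V_\beta$ vanishes; translating this back to $g$ on $M$, together with the constancy facts $h(W_\alpha,w_\alpha)$ forces the $g$-inner product of $\nabla_X W_\alpha$ with the normal directions to be controlled purely by $X(w_\alpha)$, which is precisely what makes the normal part of $\nabla_X\nu$ vanish. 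This is the same ``differentiate the splitting of $(0,1)$ and use $h$-orthogonality of $V_\alpha$ and $V_\beta$'' mechanism that powered Lemmas~\ref{le:V1McapV2M} and~\ref{le:intDi}, so I expect the computation to close without new geometric input.
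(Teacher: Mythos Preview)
Your proposal is correct and follows essentially the same mechanism as the paper: lift to $TM\oplus\R$, use the $\nabla^{-1}$-invariance of $V_\alpha$, and translate the resulting $h$-orthogonality to $V_\beta$ back into a $g$-orthogonality statement on $M$. The paper's execution is slightly slicker in one respect: rather than differentiating $(W_\alpha,w_\alpha)$ and then applying the quotient rule to assemble $\nabla_X\nu$, it differentiates the already-normalized section $(w_\alpha^{-1}W_\alpha,1)=w_\alpha^{-1}(W_\alpha,w_\alpha)\in V_\alpha$ directly, so that the second slot of $\nabla^{-1}_X(w_\alpha^{-1}W_\alpha,1)$ is $g(X,w_\alpha^{-1}W_\alpha)=0$ immediately and one reads off $\nabla_X\nu+X\in V_\alpha^M$, hence $\nabla_X\nu\in\mc{D}_\alpha$, in one line. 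Your route via differentiating the splitting $(0,1)=(W_\alpha,w_\alpha)+(W_\beta,w_\beta)$ is equally valid and in fact yields the slightly stronger explicit formula $\nabla_X W_\alpha=(1-w_\alpha)X$ and $X(w_\alpha)=0$, from which $\nabla_X\nu=(w_\beta/w_\alpha)X\in\mc{D}_\alpha$ follows at once; so what you flagged as the ``main obstacle'' dissolves without needing the second testing step you sketched.
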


\begin{proof}
We need to show that $\nabla_X (w_\alpha^{-1}W_\alpha)\in\mc{D}_\alpha$ for all $X\in\mc{D}_\alpha$
on $M\setminus N_\beta$.
Indeed, $g(W_\alpha,X)=0$ because $W_\alpha=-W_{\beta}\in V_{\beta}^M=\mc{D}_\alpha^{\perp}$
and since $(w_\alpha^{-1}W_\alpha,1)=w_\alpha^{-1}(W_\alpha,w_\alpha)\in V_\alpha$,
we have
\[
V_\alpha\ni \nabla^{-1}_X (w_\alpha^{-1}W_\alpha,1)
&=(\nabla_X (w_\alpha^{-1}W_\alpha)+X,0+g(X,w_\alpha^{-1}W_\alpha))\\
&=(\nabla_X (w_\alpha^{-1}W_\alpha)+X,0).
\]
Because $X\in (V_{\beta}^M)^\perp$, it then follows that
\[
0&=h(\{(\nabla_X (w_\alpha^{-1}W_\alpha)+X,0)\}\times V_{\beta})
=g(\{\nabla_X (w_\alpha^{-1}W_\alpha)+X)\}\times V_{\beta}^M) \\
&=g(\{\nabla_X (w_\alpha^{-1}W_\alpha)\}\times V_{\beta}^M),
\]
i.e., $\nabla_X (w_\alpha^{-1}W_\alpha)\in (V_{\beta}^M)^\perp=\mc{D}_\alpha$.
\end{proof}

\begin{lemma}\label{le:intViM}
The distributions $V_1^M$ and $V_2^M$ are integrable
and their integral manifolds are totally geodesic.
\end{lemma}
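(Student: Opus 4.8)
The plan is to exploit the single structural fact that, since $V_\alpha$ is invariant under the holonomy group $\mcH^{-1}$ of $\nabla^{-1}$, it is \emph{parallel}: $\nabla^{-1}_X(Y,s)\in\Gamma(V_\alpha)$ for every $X\in TM$ and every $(Y,s)\in\Gamma(V_\alpha)$. Reading off the $TM$-component of this relation should deliver autoparallelism of $V_\alpha^M$ almost immediately, after which integrability follows from the torsion-freeness of $\nabla$. The arithmetic is essentially identical to the computation in Lemma~\ref{le:intDi}, the only difference being that there one is forced to use sections with second component $0$, whereas here the extra $\R W_\alpha$-direction inside $V_\alpha^M$ lets one absorb the correction term $sX$.

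First I would fix $\alpha$ and restrict to the open set where $V_\alpha^M$ is a genuine constant-rank smooth distribution: all of $M$ for $\alpha=2$ (since $N_2=\emptyset$ by Lemma~\ref{le:N1-N2}) and $M\setminus N_1$ for $\alpha=1$. On this set the bundle map $\pr\colon V_\alpha\to V_\alpha^M$, $(X,r)\mapsto X$, has trivial kernel, because $(0,r)\in V_\alpha$ with $r\neq 0$ would force $(0,1)\in V_\alpha|_x$, i.e.\ $x\in N_\alpha$. Hence $\pr$ is a smooth fibrewise isomorphism, and every smooth local section $Y$ of $V_\alpha^M$ lifts to a unique smooth section $(Y,s)\in\Gamma(V_\alpha)$ with $s$ smooth.

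Next comes the computation. Let $X,Y$ be local vector fields tangent to $V_\alpha^M$ and take the lift $(Y,s)$ above. Parallelism of $V_\alpha$, together with the explicit formula for $\nabla^{-1}$, gives
\[
\nabla^{-1}_X(Y,s)=\big(\nabla_X Y+sX,\ X(s)+g(X,Y)\big)\in V_\alpha,
\]
so its first component satisfies $\nabla_X Y+sX\in V_\alpha^M$. Since $X\in V_\alpha^M$ and $V_\alpha^M|_x$ is a linear subspace, also $sX\in V_\alpha^M$, and subtracting yields $\nabla_X Y\in V_\alpha^M$. Thus $V_\alpha^M$ is autoparallel; it is therefore involutive, since $[X,Y]=\nabla_X Y-\nabla_Y X\in V_\alpha^M$ by torsion-freeness of $\nabla$, hence integrable by Frobenius, and each of its integral manifolds is totally geodesic because the ambient covariant derivative of tangent fields stays tangent (so $\II\equiv 0$).

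Finally, the only place demanding care — the main obstacle, such as it is — is not the computation but the behaviour of $V_1^M$ along $N_1$, where its rank drops from $m+1$ to $m$ and it ceases to be a smooth distribution; there the statement must be read on $M\setminus N_1$. This is harmless for the global argument: along $N_1$ itself Lemma~\ref{le:intDi} already identifies $V_1^M|_x=T|_xN_1$ and shows $N_1$ is totally geodesic, so the two descriptions match at the boundary of the region of validity. For $V_2^M$ no such degeneracy occurs, and the argument applies verbatim on all of $M$.
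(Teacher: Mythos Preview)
Your proposal is correct and follows essentially the same approach as the paper: lift a section $Y$ of $V_\alpha^M$ uniquely to $(Y,s)\in\Gamma(V_\alpha)$ on $M\setminus N_\alpha$, read off $\nabla_X Y+sX\in V_\alpha^M$ from $\nabla^{-1}_X(Y,s)\in V_\alpha$, subtract $sX$, and conclude autoparallelism, hence involutivity and total geodesicity; the residual case $V_1^M$ along $N_1$ is handled via Lemma~\ref{le:intDi} in both arguments. Your added justification for the uniqueness of the lift (triviality of the kernel of $\pr$) is a welcome clarification the paper leaves implicit.
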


\begin{proof}
Fix $\alpha=1,2$ and let $x\in M\setminus N_\alpha$.
Since $V_\alpha^M$ has constant rank around $x$,
the integrability of it in a neighborhood $U$
of $x$ which does not intersect $N_\alpha$,
is equivalent to the involutivity of $V_\alpha^M$ on $U$.

Thus take $X,Y\in\VF(U)$ which are tangent to $V_\alpha^M$.
Then there is a unique $s\in\Cinf(U)$ such that $(Y,s)\in V_\alpha$
on $U$.
But then
\[
V_\alpha\ni \nabla^{-1}_{X} (Y,s)=(\nabla_X Y+sX,X(s)+g(X,Y)),
\]
which implies that on $U$
\[
\nabla_X Y+sX\in V_\alpha^M.
\]
Since $X$ is also tangent to $V_\alpha^M$ on $U$,
we get that $\nabla_X Y\in V_\alpha^M$ on $U$ as well. 

But since $\nabla$ is torsion free
and since by the above $\nabla_X Y,\nabla_Y X\in V_\alpha^M$ on $U$,
one has that $[X,Y]=\nabla_X Y-\nabla_Y X\in V_\alpha^M$ on $U$,
i.e. $V_\alpha^M$ is involutive on $U$.

Moreover, if $O$ is an integral manifold of $V_\alpha^M$ through $y\in U$,
and if $X,Y\in\VF(O)$,
then on some neighborhood $U'\subset U$ of $y$ in $M$,
there are $\tilde{X},\tilde{Y}\in\VF(U')$ which restrict to $X,Y$ on $O$
and are tangent to $V_\alpha^M$ on $U'$.
Then $\nabla_{X} Y=\nabla_{\tilde{X}} \tilde{Y}$ on $O$
and by what was shown above, this is tangent to $V_\alpha^M$
i.e. tangent to $O$.
Thus $O$ is totally geodesic.

This proves that $V_\alpha^M$ is involutive on $M\setminus N_\alpha$
and that its integral manifolds are totally geodesic.
Since $N_2=\emptyset$, the only thing left
is to notice that by Lemma \ref{le:intDi}, $N_1$ is an integral manifold of $V_1^M$
because $\mc{D}_1|_y=V_1^M|_y$ for all $y\in N_1$.
\end{proof}

\begin{lemma}\label{le:par01}
For every $x\in M$ and every unit vector $u\in T|_x M$, one has
\[
(P^{\nabla^{-1}})_0^t(\gamma_u)(0,1)=(-\sinh(t)\dot{\gamma}_u(t),\cosh(t)),
\]
where $\gamma_u(t)=\exp_x(tu)$.
In particular, if $x\in N_1$ and $u\in V_2^M|_x$, $\n{u}_g=1$, then $\dot{\gamma}_u(t)\in V_1^M\cap V_2^M$
for all $t\neq 0$.
\end{lemma}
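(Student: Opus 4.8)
The plan is to first solve the parallel-transport ODE explicitly along $\gamma_u$, and then read off the ``in particular'' statement from the invariance of $V_1$ under holonomy together with the total geodesy of the leaves of $V_2^M$.

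First I would write $(P^{\nabla^{-1}})_0^t(\gamma_u)(0,1)=(Y(t),s(t))$ and use that this is $\nabla^{-1}$-parallel along the unit-speed geodesic $\gamma_u$, so that it satisfies $\nabla_{\dot\gamma_u}Y=-s\dot\gamma_u$ and $\dot s=-g(\dot\gamma_u,Y)$ with initial data $Y(0)=0$, $s(0)=1$. The scalar part decouples into the equation $\ddot s-s=0$ derived above, and since $\dot s(0)=-g(u,0)=0$ this forces $s(t)=\cosh t$. For the vector part I would try the ansatz $Y(t)=f(t)\dot\gamma_u(t)$; because $\gamma_u$ is a geodesic one has $\nabla_{\dot\gamma_u}(f\dot\gamma_u)=\dot f\,\dot\gamma_u$, so the equation $\nabla_{\dot\gamma_u}Y=-\cosh(t)\dot\gamma_u$ reduces to $\dot f=-\cosh t$ with $f(0)=0$, giving $f(t)=-\sinh t$. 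Hence $(Y(t),s(t))=(-\sinh(t)\dot\gamma_u(t),\cosh t)$, and the check $\dot s=\sinh t=-g(\dot\gamma_u,-\sinh(t)\dot\gamma_u)$ confirms consistency; uniqueness of solutions to the linear parallel-transport equation then identifies this as the parallel transport. This is a routine computation and presents no real difficulty.

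For the second assertion, suppose $x\in N_1$ and $u\in V_2^M|_x$ with $\n{u}_g=1$. Since $x\in N_1$ means $(0,1)\in V_1|_x$, and $V_1$ is invariant under the holonomy of $\nabla^{-1}$, the parallel transport $(P^{\nabla^{-1}})_0^t(\gamma_u)(0,1)$ remains in $V_1$; by the formula just obtained its $M$-component $-\sinh(t)\dot\gamma_u(t)$ therefore lies in $V_1^M|_{\gamma_u(t)}$, and for $t\neq0$ division by $-\sinh t\neq0$ yields $\dot\gamma_u(t)\in V_1^M|_{\gamma_u(t)}$. For membership in $V_2^M$ I would invoke Lemma \ref{le:intViM}: the leaves of $V_2^M$ are totally geodesic, and since $u=\dot\gamma_u(0)\in V_2^M|_x$ the geodesic $\gamma_u$ stays inside the leaf of $V_2^M$ through $x$, whence $\dot\gamma_u(t)\in V_2^M|_{\gamma_u(t)}$ throughout its domain. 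Intersecting the two conclusions gives $\dot\gamma_u(t)\in V_1^M\cap V_2^M$ for all $t\neq0$.

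The one point requiring care is the $V_2^M$-membership, namely justifying that an ambient geodesic issuing tangentially to a totally geodesic leaf remains tangent to it. I would argue this by noting that the intrinsic geodesic of the leaf with initial velocity $u$ is, by total geodesy, also a geodesic of $M$, hence equals $\gamma_u$ by uniqueness of geodesics; a connectedness argument on the domain interval then shows $\gamma_u$ lies in the (maximal) leaf wherever it is defined, so its velocity stays in $V_2^M$. I expect this to be the only genuine, though mild, obstacle, since every other step is either the explicit ODE solution or the immediate use of holonomy-invariance of $V_1$.
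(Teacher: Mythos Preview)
Your proposal is correct and follows essentially the same approach as the paper: for the first claim the paper simply verifies directly that $(-\sinh(t)\dot\gamma_u(t),\cosh(t))$ is $\nabla^{-1}$-parallel with the right initial value (rather than deriving it via the decoupled ODE and ansatz, but this is the same computation), and for the second claim it likewise combines parallel-transport invariance of $V_1$ with Lemma~\ref{le:intViM} exactly as you do.
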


\begin{proof}
Let $(X(t),r(t)):=(-\sinh(t)\dot{\gamma}_u(t),\cosh(t))$. Then $(X(0),r(0))=(0,1)$ and the covariant derivative $\nabla^{-1}_{\dot{\gamma}_u(t)} (X,r)$ equals
\[
&\Big(\nabla_{\dot{\gamma}_u(t)} \big(-\sinh(t)\dot{\gamma}_u(t)\big)+\cosh(t)\dot{\gamma}_u(t),\dif{t}\cosh(t)+g(-\sinh(t)\dot{\gamma}_u(t),\dot{\gamma}_u(t))\Big) \\
&=\big(-\cosh(t)\dot{\gamma}_u(t)+\cosh(t)\dot{\gamma}_u(t),\sinh(t)-\sinh(t)\n{u}_g^2\big) \\
&=(0,0).
\]
This proves that $(X(t),r(t))=(P^{\nabla^{-1}})_0^t(\gamma_u)(0,1)$.

We prove the second claim.
Let $x\in N_1$ and $u\in V_2^M|_x$, $\n{u}_g=1$.
Since $V_2^M$ is integrable and its integral manifolds are totally geodesic by Lemma \ref{le:intViM},
it follows that $\dot{\gamma}_u(t)\in V_2^M$ for all $t$.
On the other hand, since $(0,1)\in V_1|_x$ by the definition of the set $N_1$,
we have $(P^{\nabla^{-1}})_0^t(\gamma_u)(0,1)\in V_1$ for all $t$,
i.e., $(-\sinh(t)\dot{\gamma}_u(t),\cosh(t))\in V_1$ for all $t$
and this implies that $-\sinh(t)\dot{\gamma}_u(t)\in V_1^M$ for all $t$.
Hence $\dot{\gamma}_u(t)\in V_1^M$ if $t\neq 0$.
\end{proof}

\begin{lemma}\label{le:warpedV2M}
\begin{itemize}
\item[(i)] Let $\{\alpha,\beta\}=\{1,2\}$.
Then if $x\notin M\backslash N_{\beta}$,
then there is an integral manifold $O_\alpha$
of $V_\alpha^M$ through $x$
such that $(O_\alpha,g|_{O_\alpha})$ is isometric to $(I\times M_\alpha,\diff s^2\oplus_{f_\alpha(s)} g_\alpha)$
where $I\subset\R$ is an open interval and $f_\alpha\in\Cinf(I)$ satisfies
$f_\alpha''-f_\alpha=0$.

\item[(ii)] If $x\in N_1$ then there exists an integral manifold $O_2$ of $V_2^M$ through $x$
and $(O_2,g|_{O_2})$ has constant curvature $-1$ if $\rank V_2^M\geq 2$.

\end{itemize}
\end{lemma}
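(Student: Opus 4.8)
The plan is to realize each integral manifold $O_\alpha$ of $V_\alpha^M$ as a warped product by applying Hiepko's criterion (Theorem~\ref{th:warped}) \emph{inside} $O_\alpha$, and then to read off the warping function from the curvature identity of Lemma~\ref{le:curvature}. Since $O_\alpha$ is totally geodesic in $M$ by Lemma~\ref{le:intViM}, its induced Levi-Civita connection is the restriction of $\nabla$, so all computations may be carried out with $\nabla$ and $R$. Away from $N_1$ one has the orthogonal splitting $V_\alpha^M=\mc{D}_\alpha\oplus\R W_\alpha$, and the idea is to take $\mc{D}_\alpha$ as the (spherical) fibre distribution and $\R W_\alpha$ as the one-dimensional (totally geodesic) base distribution; then the base $I$ is an integral curve of $W_\alpha$ and the fibre $M_\alpha$ is an integral manifold of $\mc{D}_\alpha$.

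To verify Hiepko's three hypotheses I would argue as follows. Integrability of $\mc{D}_\alpha$ is Lemma~\ref{le:intDi}, and $\R W_\alpha$ is integrable because it has rank one; the fibre distribution $\mc{D}_\alpha$ is spherical by Lemma~\ref{le:spherical}. For the totally geodesic condition on the base I would show that the integral curves of $W_\alpha$ are pregeodesics. Using the explicit connection, $\nabla^{-1}_X(0,1)=(X,0)$ for every $X$; decomposing this along the $\nabla^{-1}$-parallel splitting $V_\alpha\oplus V_\beta$ and writing $(W_\alpha,0)=w_\beta(W_\alpha,w_\alpha)-w_\alpha(W_\beta,w_\beta)$ gives $\nabla^{-1}_{W_\alpha}(W_\alpha,w_\alpha)=w_\beta(W_\alpha,w_\alpha)$, whose $TM$-component reads $\nabla_{W_\alpha}W_\alpha=(w_\beta-w_\alpha)W_\alpha$. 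Hence $\nabla_{W_\alpha}W_\alpha\in\R W_\alpha$ and $\R W_\alpha$ is totally geodesic. Hiepko's theorem then presents $O_\alpha$ as $(I\times M_\alpha,\diff s^2\oplus_{f_\alpha}g_\alpha)$ with $s$ the arclength along the unit radial field $\partial_s=W_\alpha/\n{W_\alpha}_g$. To identify $f_\alpha$ I would compute the radial sectional curvature intrinsically: by Lemma~\ref{le:curvature}, $R(X,W_\alpha)W_\alpha=-B(X,W_\alpha)W_\alpha=-\n{W_\alpha}_g^2 X$ for $X\in\mc{D}_\alpha$ (using $X\perp W_\alpha$), so $R(X,\partial_s)\partial_s=-X$ and $K(\partial_s,X)=-1$. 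Comparing with the warped-product formula $K(\partial_s,X)=-f_\alpha''/f_\alpha$ yields $f_\alpha''-f_\alpha=0$, proving part (i).

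For part (ii), fix $x\in N_1$. Since $N_2=\emptyset$ (Lemma~\ref{le:N1-N2}), the distribution $V_2^M$ has constant rank and, by Lemma~\ref{le:intViM}, a totally geodesic integral leaf $O_2$ passes through $x$. On $O_2\setminus\{x\}$ (where $W_2\neq0$) part (i) applies and exhibits $O_2$ as a warped product $(I\times M_2,\diff s^2\oplus_{f_2}g_2)$ with $f_2''-f_2=0$. By the second part of Lemma~\ref{le:par01}, every unit-speed geodesic of $M$ issuing from $x$ in a $V_2^M$-direction has velocity in $V_1^M\cap V_2^M=\R W_2$ for $t\neq0$ (Lemma~\ref{le:V1McapV2M}, recalling $W_1=-W_2$); thus these geodesics are exactly the base curves of the warped structure, so $x$ is the centre of geodesic polar coordinates on $O_2$ and the fibres $\{s\}\times M_2$ collapse to $x$, forcing $f_2\to0$ there. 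The plan is then to use smoothness of $O_2$ at $x$: in normal coordinates the collapsing geodesic spheres become round, which forces $(M_2,g_2)$ to be a round sphere of constant curvature $\kappa=(f_2')^2-f_2^2$ (this quantity is constant along $s$ because $f_2''=f_2$). Substituting $K^{M_2}=\kappa$ into the warped-product fibre formula $K(X,Y)=(K^{M_2}-(f_2')^2)/f_2^2=-1$, and combining with $K(\partial_s,X)=-1$ from part (i), shows that every sectional curvature of $O_2$ equals $-1$, i.e.\ $O_2$ has constant curvature $-1$ whenever $\rank V_2^M\geq 2$.

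The main obstacle I expect is the analysis at the centre $x\in N_1$ in part (ii): one must justify rigorously that the degenerate warped product $\diff s^2\oplus_{f_2}g_2$ with vanishing warping function at the centre closes up to a smooth manifold at $x$, and that this smoothness pins the fibre down to a round sphere of intrinsic curvature exactly $(f_2')^2-f_2^2$. Everything else is either a direct application of Lemmas~\ref{le:intDi}, \ref{le:spherical}, \ref{le:intViM} and Theorem~\ref{th:warped}, or the short curvature computation from Lemma~\ref{le:curvature}; the only points needing care are the sign conventions in the warped-product curvature identities and the bookkeeping of the index convention $\{\alpha,\beta\}=\{1,2\}$ (note that for $\alpha=1$ the distribution $V_1^M$ degenerates precisely on $N_1$, so the warped description there is valid on $M\setminus N_1$).
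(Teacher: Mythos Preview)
Your argument for part (i) matches the paper's almost exactly; the only cosmetic difference is that where you compute $\nabla_{W_\alpha}W_\alpha\in\R W_\alpha$ directly from the rolling connection, the paper observes instead that the integral curves of $\R W_\alpha=V_1^M\cap V_2^M$ are (locally) intersections of the totally geodesic leaves of $V_1^M$ and $V_2^M$ (Lemma~\ref{le:intViM}), hence automatically geodesics. The curvature step leading to $f_\alpha''=f_\alpha$ is identical to the paper's.

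For part (ii) you take a genuinely different route, and the obstacle you flag is a real gap: promoting the \emph{local} warped structure of part~(i) on $O_2\setminus\{x\}$ to a single global one over the whole punctured leaf needs justification (for $k=2$ the punctured leaf is not even simply connected, so the global clause of Theorem~\ref{th:warped} does not apply), and the ``smooth closure at the centre forces a round sphere'' step, while standard, is not free. The paper avoids all of this with a direct Jacobi-field computation that uses exactly the ingredients you already have in hand. From Lemma~\ref{le:par01} you know that every unit radial geodesic $\gamma_u$ from $x$ satisfies $\dot\gamma_u(t)\in\R W_1$ for $t\neq0$, so Lemma~\ref{le:curvature} gives $R(\dot\gamma_u,Y)\dot\gamma_u=-B(\dot\gamma_u,Y)\dot\gamma_u=Y$ whenever $Y\perp\dot\gamma_u$; since $O_2$ is totally geodesic, the Jacobi equation on $O_2$ reads $\nabla^2_{\dot\gamma_u}Y=Y$, and the solution with $Y(0)=0$, $\nabla_uY(0)=X$ is $Y_{u,X}(t)=\sinh(t)\,P_0^t(\gamma_u)X$. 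The map $F(s,u)=\exp_x(su)$ is then an isometry from $\big(\,]0,\epsilon[\times S^{k-1},\ \diff s^2\oplus_{\sinh(s)}g|_{S^{k-1}}\big)$ onto $O_2\setminus\{x\}$, so $O_2$ has constant curvature $-1$. This is shorter and bypasses the smoothness-at-the-centre analysis entirely; in effect, your radial-curvature identity $K(\partial_s,X)=-1$ already \emph{is} the Jacobi equation, and using it directly is cleaner than routing through a second application of Hiepko and a closure argument.
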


\begin{proof}
(i)
Without loss of generality, one may assume that $\alpha=2$, $\beta=1$,
since the proof of the other case is completely symmetric.
Assume that $x\notin N_1$
and let $O_2$ be an integral manifold of $V_2^M$
through $x\in M$ such that $O_2\cap N_1=\emptyset$.
Clearly it is enough to assume that $\dim O_2\geq 2$.
%(since if $\dim O_2=1$, one may shrink $O_2$ around $x$
%such that $(O_2,g_2|_{O_2})$ becomes isometric to $(I,\diff s^2)$ for some interval $I\subset\R$
%and take any $f_2\in \Cinf(I)$ as the warping function, especially such a one that satisfies $f_2''=f_2$).
In this case, the 1-dimensional integral manifolds of the distribution $\R W_2=V_1^M\cap V_2^M$ spanned by $W_2$ on $O_2$
are geodesics since they 
are (locally) the intersections of integral manifolds of $V_1^M$ and $V_2^M$,
which are totally geodesic by Lemma \ref{le:intViM}.
Moreover, integral manifolds of $\mc{D}_2$
are spherical by Lemma \ref{le:intDi}
and $T|_y O_2=\R W_2|_y\oplus \mc{D}_2|_y$ for all $y\in O_2$,
so $O_2$ is locally a warped product (see Theorem \ref{th:warped}) of the form
$(I\times M_2,\diff s^2\oplus_{f_2(s)} g_2)$ where $I\subset\R$ is an open interval
and $f_2\in\Cinf(I)$.
Moreover, for $y\in O_2$ and $X\in \mc{D}_2|_y=T|_y M_2$, we have
\[
g(W_2,W_2)X=B(X,W_2)W_2
=-R(X,W_2)W_2=\frac{H^{f_2}(W_2,W_2)}{f_2}X
=\frac{f''_2}{f_2}g(W_2,W_2)X,
\]
where the second equality follows from Lemma \ref{le:curvature}
and the third equality follows from \cite[Proposition 42, case (2)]{oneill83}, 
(here $H^{f_2}$ is the Hessian of $f_2$).
Taking any non-zero $X\in \mc{D}_2|_y$
(there exist one since $\rank\mc{D}_2\geq 1$)
and noticing that $W_2|_y\neq 0$ since $y\notin N_1$,
we get the claimed equation $f_2''=f_2$ for $f_2$.
This establishes the first part of the lemma.

(ii) Assume that $x\in N_1$.
Let $k:=\rank V_2^M=n-m$
and let $\epsilon>0$ be small enough such that
$\exp_x$ is defined and diffeomorphism from $B:=\{X\in V_2^M|_x\ |\ \n{X}_g<\epsilon\}$
onto its image and that $B\cap N_1=\{x\}$
(this is possible since $N_1$ is embedded in $M$).
Since integral manifolds of $V_2^M$ are totally geodesic,
it follows that $O_2:=\exp_x B$ is an integral manifold of $V_2^M$.

Suppose $u,X\in V_2^M|_x$ be such that $u\perp X$ and $\n{u}_g=1$
and define for $t\in [0,\epsilon[$,
\[
\gamma_u(t)&:=\exp_x(tu), \\
Y_{u,X}(t)&:=\sinh(t)P_0^t(\gamma_u)X.
\]
We claim that $Y_{u,X}$ is the Jacobi field $Y$ along $\gamma_u$
such that $Y(0)=0$, $\nabla_{u} Y(0)=X$.

The claims about initial values being obviously true for $Y_{u,X}$,
it remains to show that $Y_{u,X}$ satisfies the Jacobi-equation.
Notice that $\dot{\gamma}_u(t)\perp Y_{u,X}(t)$ for all $t$
and recall that by Lemmas \ref{le:V1McapV2M} and \ref{le:par01}, $\dot{\gamma}_u(t)\in V_1^M\cap V_2^M=\R W_1$ when $t\neq 0$.
Therefore Lemma \ref{le:curvature} implies that
\[
R(\dot{\gamma}_u(t),Y_{u,X}(t))\dot{\gamma}_u(t)
=-B(\dot{\gamma}_u(t),Y_{u,X}(t))\dot{\gamma}_u(t)
=g(\dot{\gamma}_u(t),\dot{\gamma}_u(t))Y_{u,X}(t)
=Y_{u,X}(t),
\]
while
\[
\nabla_{\dot{\gamma}_u(t)}\nabla_{\dot{\gamma}_u} Y_{u,X}(t)
=\nabla_{\dot{\gamma}_u(t)} (\cosh(t)P_0^t(\gamma_u)X)=\sinh(t)P_0^t(\gamma_u)X=Y_{u,X}(t).
\]
Hence the claim has been established.

%Define (here we use a convention that $S^0=\{-1,+1\}$, the case which occurs if $k=0$)
Let $S^{k-1}$ be the $(k-1)$-dimensional  unit sphere $\{X\in V_2^M|_x\ |\ \n{X}_g=1\}$ of $V_2^M|_x$
and define
\[
%F:(]0,\epsilon[\times S^{k-1},\diff s^2\oplus_{\sinh(s)} {\bf g}_{k-1;1})\to (M,g);
F:(]0,\epsilon[\times S^{k-1},\diff s^2\oplus_{\sinh(s)} g|_{S^{k-1}})\to (M,g);
\quad F(s,X)=\exp_x(sX).
\]
Then $F$ is a diffeomorphism onto $O_2\backslash\{x\}=\exp_x (B\backslash \{0\})$
and
\[
F_*|_{(s,u)}(\alpha\doo_s+X)=\alpha\dot{\gamma}_u(s)+Y_{u,X}(s),
\]
whence
\[
\n{F_*|_{(s,u)}(\alpha\doo_s+X)}^2_{g}
=&\alpha^2\n{u}_g^2+\n{Y_{u,X}(s)}_g^2
=\alpha^2\n{u}_g^2+\sinh^2(s)\n{X}_g^2 \\
=&\n{\alpha\doo_s+X}^2_{\diff s^2\oplus_{\sinh(s)} g|_{S^{k-1}}}.
%=&\n{\alpha\doo_s+X}^2_{\diff s^2\oplus_{\sinh(s)} {\bf g}_{k-1;1}}.
\]
This means that the mapping $F$ is an isometry from
$(]0,\epsilon[\times S^{k-1},\diff s^2\oplus_{\sinh(s)} g|_{S^{k-1}})$
onto $(O_2\backslash\{x\},g|_{O_2\backslash\{x\}})$.
%$(\exp_x (B\backslash \{0\}),g|_{\exp_x (B\backslash \{0\})})$,
Since the former Riemannian manifold has constant curvature $-1$,
if $k\geq 2$,
it follows that $(O_2,g|_{O_2})$ has constant curvature $-1$, if $k\geq 2$.
\end{proof}

We now finish the proof of Proposition \ref{pr:main-local}.

\begin{proof}[Proof of Proposition \ref{pr:main-local}]
First we consider the case where $k:=\rank V^M_2\geq 2$.
Let $x\in M$ be fixed.
Since integral manifolds of the constant rank distribution $\mc{D}_1$ are spherical and
those of $V_2^M=\mc{D}_1^\perp$ are totally geodesic,
Theorem \ref{th:warped}
implies that there is a neighborhood $U$ of $x$ in $M$
such that $(U,g|_U)$
is isometric to a warped product $(O_2\times M_1,g|_{O_2}\oplus_{f_1} g_1)$
where $O_2$ is an integral manifold of $V_2^M$ through $x$,
$M_1$ is an integral manifold of $\mc{D}_1$ through $x$ and $f_1\in\Cinf(O_2)$
(see Remark \ref{re:warped}).

First consider the situation where $x\in M\backslash N_1$.
By Lemma \ref{le:warpedV2M}, after shrinking $U$ if necessary around $x$,
$(O_2,g|_{O_2})$ is isometric to $(I\times M_2,\diff s^2\oplus_{f_2(s)} g_2)$
where $I\subset\R$ is an open interval and $f_2\in\Cinf(I)$.
We may also assume that $U\cap N_1=\emptyset$.
Hence, $(U,g|_U)$ is isometric to
\[
(I\times M_2\times M_1,(\diff s^2\oplus_{f_2(s)}g_2)\oplus_{f_1} g_1),
\]
where $f_1\in\Cinf(O_2)$, $f_2\in\Cinf(I)$ and $f_2''-f_2=0$.

We show that $f_1\in\Cinf(I)$ and $f_1''-f_1=0$.
Indeed, if $X\in W_1^\perp$, we have
\begin{align}\label{eq:f_1s}
X(f_1)=g(\nabla f_1,X)=-\frac{f_1}{w_1}g(W_1,X)=0,
\end{align}
which shows that $f_1\in\Cinf(I)$.
Moreover, for $y\in O_2$ and $X\in W_2^\perp|_y\cap T|_yO_2=\mc{D}_2|_y$, we have
(see Lemma \ref{le:curvature} and \cite{oneill83}, Proposition 42, case (2))
\begin{align}\label{eq:ode-f_1}
g(W_2,W_2)X=B(X,W_2)W_2
=-R(X,W_2)W_2=\frac{H_{f_1}(W_2,W_2)}{f_1}X
=\frac{f''_1}{f_1}g(W_2,W_2)X.
\end{align}
Because $\rank V_2^M\geq 2$, it follows that $\rank \mc{D}_2\geq 1$ on $U$.
Therefore, one may take above $X\neq 0$,
which implies that $f_1''=f_1$ and proves the claim.

We also know that
\[
w_\alpha^{-1} W_\alpha=-\frac{f_\alpha'}{f_\alpha}\doo_s,\quad \alpha=1,2,
\]
while
\[
0=&w_1^{-1}w_2^{-1}h((W_1,w_1),(W_2,w_2))
=h((w_1^{-1}W_1,1),(w_2^{-1}W_2,1))
=\frac{f_1'f_2'}{f_1f_2}-1.
\]
Writing
\[
f_\alpha(s)=A_\alpha\cosh(s)+B_\alpha\sinh(s),
\]
the above means that $A_1A_2-B_1B_2=0$.
Since $(A_i,B_i)\neq (0,0)$, $i=1,2$,
we may rescale, if necessary, the metrics of $(M_1,g_1)$ and $(M_2,g_2)$
so as to guarantee that either a) $A_1^2-B_1^2=+1$ and $A_2^2-B_2^2=-1$
or b) $A_1^2-B_1^2=-1$ and $A_2^2-B_2^2=+1$.
But since $h|_{V_2}$ is positive definite,
$\n{(W_2,w_2)}_{h}^2\geq 0$ and hence
\[
0\leq \n{(w_2^{-1}W_2,1)}_h^2=\left(\frac{f_2'(s)}{f_2(s)}\right)^2-1,
\]
which implies that $|A_2|\leq |B_2|$,
so only Case a) is possible.

It then follows easily that on $I$,
\[
f_1^2-f_2^2=1.
\]
Thus there is $s_0\in \R$ such that if one writes
$\tilde{I}=I-s_0$, then for all $\tilde{s}\in \tilde{I}$,
\[
f_1(\tilde{s}+s_0)&=\cosh(\tilde{s})=:\tilde{f}_1(\tilde{s}), \\
f_2(\tilde{s}+s_0)&=\sinh(\tilde{s})=:\tilde{f}_2(\tilde{s}).
\]
Since $(I\times M_2\times M_1,\diff s^2\oplus_{f_2(s)} g_2\oplus_{f_1(s)} g_1)$
is isometric to $(\tilde{I}\times M_1\times M_2,\diff s^2\oplus_{\tilde{f}_1(s)} g_1\oplus_{\tilde{f}_2(s)} g_2)$,
we have proved {\rm (LW2)} of Proposition~\ref{pr:main-local} when $\rank V_2^M\geq 2$.

Next we consider the case where $x\in N_1$.
In this situation, \ref{le:warpedV2M} case (ii) implies (after maybe shrinking $U$ around $x$)
that $(O_2,g|_{O_2})$ is isometric to an open subset of $(\HH^k,{\bf g}_{k;-1})$.
Moreover, by the proof of \ref{le:warpedV2M} case (ii),
we may assume that $(O_2\backslash\{x\},g|_{O_2\backslash\{x\}})$
is isometric to $(]0,\epsilon[\times S^{k-1},\diff s^2\oplus_{\sinh(s)} g|_{S^{k-1}})$
by the map $F$ introduced there. 

We show that $\tilde{f}_1:=f_1\circ F\in\Cinf(]0,\epsilon[)$ and $\tilde{f}_1''-\tilde{f}_1=0$.
Indeed, if $X\in W_1^\perp$, then $X(f_1)=0$ by \eqref{eq:f_1s}.
Hence, in particular, if $\tilde{X}\in T|_{F^{-1}(y)} S^{k-1}$ for $y\in U\backslash\{x\}$,
then $\tilde{X}\perp \partial_s$ i.e. $F_*(\tilde{X})\perp W_1$,
and hence $\tilde{X}(\tilde{f}_1)=0$. This shows that $\tilde{f}_1$
is constant on each set $\{s\}\times S^{k-1}$, $s\in ]0,\epsilon[$,
since $S^{k-1}$ is connected
and thus $\tilde{f}_1\in \Cinf(]0,\epsilon[)$.
From Eq. \eqref{eq:ode-f_1} one then infers that $\tilde{f}_1''-\tilde{f}_1=0$.

Thus for some $A_1,B_1\in\R$,
\[
\tilde{f}_1(s)=A_1\cosh(s)+B_1\sinh(s).
\]
Recall that $-\frac{\nabla f_1}{f_1}\doo_s=w_1^{-1}W_1$ where
\[
(0,1)=(W_1,w_1)+(W_2,w_2),
\]
with $(W_1,w_1)\in V_1$, $(W_2,w_2)\in V_2$.
Since $\n{(W_2,w_2)}_h^2\geq 0$, $\n{(0,1)}_h^2=-1$
and $\nabla f_1=\tilde{f}_1'F_*(\partial_s)$,
it follows that 
\begin{align}\label{eq:B_1leqA_1}
\left(\frac{\tilde{f}_1'}{\tilde{f}_1}\right)^2-1=\n{(w_1^{-1}W_1,1)}_h^2<0,
\end{align}
and hence $|B_1|<|A_1|$.
Then, one can normalize $A_1,B_1$ such that $A_1^2-B_1^2=-1$
and by eventually replacing, as before, $s$ by $s+s_0$, for some $s_0\in\R$
(these operations just rescale the metric $g_1$ by a constant),
one gets $\tilde{f}_1(s)=\cosh(s)$.

If $d(y):=d(x,y)$ is the distance function of $(O_2,g|_{O_2})$ from $x$,
then clearly $s=d(F(s,u))$ for $(s,u)\in ]0,\epsilon[\times S^{k-1}$,
which implies that $f_1(y)=\cosh(d(y))$ for all $y\in O_2$.
Thus we have arrived at {\rm (LW3)} when $\rank V_2^M\geq 2$.

It remains to provide an argument for the case $\rank V_2^M=1$.
Let $x\in M$. Then by Lemma \ref{le:warpedV2M} case (i) with $\alpha=1$,
one gets that $(O_1,g|_{O_1})$ is isomorphic to a warped product $(I\times M_1,\diff s^2\oplus_{f_1(s)} g_1)$
where $f_1\in\Cinf(I)$ satisfies $f_1''-f_1=0$.
Then exactly the same argument as above, replacing $\tilde{f}_1$ by $f_1$,
leads to \eqref{eq:B_1leqA_1}
and to the conclusion that we may take $f_1(s)=\cosh(s)$
(after scaling the metric $g_1$ by a constant).
Hence we have {\rm (LW1)} and the proof of Proposition \ref{pr:main-local} complete.

%i.e.
%\[
%f_1(s)=A_1\cosh(s)+B_1\sinh(s).
%\]
%Recall that $-\frac{f_1'}{f_1}\doo_s=w_1^{-1}W_1$ where
%\[
%(0,1)=(W_1,w_1)+(W_2,w_2),
%\]
%with $(W_1,w_1)\in V_1$, $(W_2,w_2)\in V_2$.
%Since $\n{(W_2,w_2)}_h^2\geq 0$ and $\n{(0,1)}_h^2=-1$,
%it follows that
%\[
%\Big(\frac{f_1'}{f_1}\Big)^2-1=\n{(w_1^{-1}W_1,1)}_h^2<0
%\]
%and hence $|B_1|<|A_1|$.
%Then, one can normalize $A_1,B_1$ such that $A_1^2-B_1^2=-1$
%and by eventually replacing, as before, $s$ by $s+s_0$, for some $s_0\in\R$,
%one gets $f_1(s)=\cosh(s)$.
\end{proof}

\begin{proof}[Proof of Theorem \ref{th:main}]
Let us now assume that $(M,g)$ is complete and simply connected
and use the notation of the above proof of Proposition \ref{pr:main-local}. 

By Theorem \ref{th:warped}, we have in the proof of Proposition \ref{pr:main-local}
that $(O_2,g|_{O_2})$ and $(M_1,g_1)$ are complete
and simply connected.
Since $N_1\neq\emptyset$, we may further assume that $x\in N_1\cap O_2$.

Then if $k:=\rank V_2^M\geq 2$, the argument leading to {\rm (LW3)}
shows, since one may take $\epsilon=+\infty$ there, that
$(O_2,g|_{O_2})$ is isometric to $(\HH^k,{\bf g}_{k;-1})$
and that $f_1$ can be chosen to be $\cosh(d(\cdot))$
where $d$ is the distance function on $(\HH^k,{\bf g}_{k;-1})$
from the point corresponding to $x$.
This proves {\rm (WP2)}.

If $k=\rank V_2^M=1$, then in the argument leading to {\rm (LW1)},
one may take $I=\R$
and hence we have {\rm (WP1)}.
This completes the proof of Theorem \ref{th:main}.

%Then by Theorem \ref{th:warped},
%we have that $(O_2,g|_{O_2})$ and $(M_1,g_1)$ are complete
%and simply connected.
%Therefore, Theorem \ref{th:warped} implies further that $(I,\diff s^2)$
%and $(M_2,g_2)$ are complete and simply connected
%and hence $I=\R$.
%Since $N_1\neq\emptyset$, we may assume that $x\in O_2\cap N_1$
%and so we get that either $(O_2,g|_{O_2})$ has constant curvature $=-1$, if $k=\rank \mc{D}_2\geq 1$
%or $(O_2,g|_{O_2})$ is isometric to $(\R,\diff s^2)$ if $k=0$.
%Thus $(O_2,g|_{O_2})$ is isometric to $(\HH^k,{\bf g}_{k;-1})$ for $k=\dim O_2$.
%Also, we may assume that $M_1$ is a connected component of $N_1$
%and since it is therefore totally geodesic by Lemma \ref{le:intDi},
%we must have that $f_1'(0)=0$
%and thus $f_1(s)=C\cosh(s)$ for some $C\neq 0$.
%By rescaling the metric of $(M_1,g_1)$ if necessary, we may take $C=+1$ here.
\end{proof}

\subsubsection{Case $V_1\cap V_2\neq\{0\}$}

In this case, $\dim (V_1\cap V_2)=1$
and $V_1\cap V_2$ is lightlike and invariant
by $\mcH^{-1}$ since $V_1$ and $V_2$ are.
Therefore, at every point $x\in M$,
there existsa tangent vector $L|_x\in T|_x M$ such that $V_1|_x\cap V_2|_x=\R(L|_x,1)$.
In this way, we may choose $L|_x$ locally
such that $L:=(x\mapsto L|_x)$ becomes a smooth locally defined
vector field on $M$ and if $M$ is simply connected,
$L$ can be chosen to be globally defined.

Since $(L,1)$ is lightlike vector in $T|_x M\oplus\R$,
we have $0=\n{(L,1)}_h^2=\n{L}_g^2-1$
i.e. $L$ is a unit vector field.

\begin{lemma}\label{le:nablaX-L}
For all $X\in TM$, we have that
$\nabla_X L=-X+g(X,L)L$.
\end{lemma}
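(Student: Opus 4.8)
The plan is to exploit the fact that the line subbundle $V_1\cap V_2=\R(L,1)$ is invariant under the holonomy group $\mcH^{-1}$, hence parallel with respect to $\nabla^{-1}$. Since $V_1$ is $\mcH^{-1}$-invariant it is a parallel subbundle, and because $\nabla^{-1}$ is metric with respect to $h$, its $h$-orthogonal complement $V_2$ is parallel as well; consequently $V_1\cap V_2$ is a parallel line subbundle. This means that for the distinguished section $(L,1)$ spanning it, the covariant derivative $\nabla^{-1}_X(L,1)$ remains a section of $V_1\cap V_2$, so there is a scalar $\lambda(X)$, linear in $X$, with
\[
\nabla^{-1}_X(L,1)=\lambda(X)(L,1).
\]

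Next I would compute the left-hand side directly from the explicit formula for the rolling connection. Since the second component of $(L,1)$ is the constant function $1$, one has
\[
\nabla^{-1}_X(L,1)=(\nabla_X L+X,\,g(X,L)).
\]
Comparing this with $\lambda(X)(L,1)=(\lambda(X)L,\lambda(X))$ componentwise, the second slot immediately gives $\lambda(X)=g(X,L)$, and substituting this into the first slot yields $\nabla_X L+X=g(X,L)L$, that is, $\nabla_X L=-X+g(X,L)L$, which is the assertion.

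The only genuinely nontrivial point, and hence the main thing to justify carefully, is the passage from $\mcH^{-1}$-invariance of $V_1\cap V_2$ to parallelism of the section $(L,1)$: one must invoke that a subbundle invariant under the holonomy group is preserved by parallel transport along every path, not merely along loops, so that covariant differentiation of any of its sections stays inside the subbundle. Everything else is a one-line computation using the formula for $\nabla^{-1}$ together with the fact, already noted, that $\n{L}_g=1$.
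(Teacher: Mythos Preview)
Your proof is correct and follows exactly the same route as the paper: both use that the holonomy-invariant line $V_1\cap V_2=\R(L,1)$ is preserved by $\nabla^{-1}$, write $\nabla^{-1}_X(L,1)=\alpha(X)(L,1)$, compute the left side from the explicit formula for $\nabla^{-1}$, and read off $\alpha(X)=g(X,L)$ from the second component. Your remark about needing invariance under parallel transport along all paths (not just loops) is a fair point to flag, but otherwise the arguments are identical.
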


\begin{proof}
Since $V_1\cap V_2=\R(L,1)$ and because $V_1\cap V_2$
is invariant under $\mcH^{-1}$,
we get that $V_1\cap V_2$ is invariant under parallel transport with respect to $\nabla^{-1}$.
This is equivalent to the fact that for any $X\in TM$ there is $\alpha(X)\in\R$ such that
$\nabla^{-1}_X (L,1)=\alpha(X)(L,1)$, i.e.,
\[
(\nabla_X L+X,g(X,L))=\alpha(X)(L,1),
\]
from which one gets $\alpha(X)=g(X,L)$ and thus $\nabla_X L+X=g(X,L)L$.
\end{proof}

\begin{lemma}\label{le:warpedL}
The vector field $L$ is geodesic,
the distribution $L^\perp$ is integrable
and its integral manifolds are spherical.
%Thus $(M,g)$ is locally a warped product
%and if $(M,g)$ is simply connected and complete, then
%it is globally a warped product.
\end{lemma}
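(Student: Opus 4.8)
The plan is to extract all three assertions directly from the pointwise identity $\nabla_X L=-X+g(X,L)L$ furnished by Lemma \ref{le:nablaX-L}, combined with the already-established fact that $L$ is a unit vector field, i.e. $\n{L}_g^2=1$. First I would verify that $L$ is geodesic: substituting $X=L$ into Lemma \ref{le:nablaX-L} and using $g(L,L)=1$ gives $\nabla_L L=-L+g(L,L)L=0$, so the integral curves of $L$ are geodesics.

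Next I would establish integrability of $L^\perp$ by proving involutivity. For vector fields $X,Y$ tangent to $L^\perp$, so that $g(X,L)=g(Y,L)=0$, metric compatibility of $\nabla$ gives $0=X(g(Y,L))=g(\nabla_X Y,L)+g(Y,\nabla_X L)$; since $\nabla_X L=-X$ for such $X$, this yields $g(\nabla_X Y,L)=g(X,Y)$, and symmetrically $g(\nabla_Y X,L)=g(X,Y)$. Hence $g([X,Y],L)=g(\nabla_X Y-\nabla_Y X,L)=0$, so $[X,Y]\in L^\perp$ and $L^\perp$ is involutive, therefore integrable.

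Finally I would show that the integral manifolds $N$ of $L^\perp$ are spherical in the sense of Definition \ref{def:spherical}, with $\nu=L$ as the normal section (the normal bundle $TN^\perp$ is the line spanned by the unit field $L$). The computation $g(\nabla_X Y,L)=g(X,Y)$ from the previous step says exactly that the normal component of $\nabla_X Y$ is $g(X,Y)L$, so $\II_N(X,Y)=g(X,Y)\nu$ with $\nu=L$, which is the umbilicity condition. For the normal-parallelism condition \eqref{sphericalmfld}, I would invoke Lemma \ref{le:nablaX-L} once more: for $X$ tangent to $N=L^\perp$ one has $g(X,L)=0$ and hence $\nabla_X\nu=\nabla_X L=-X\in L^\perp=TN$, so $\nu$ is parallel in the normal connection.

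Since Lemma \ref{le:nablaX-L} does all the heavy lifting, each step reduces to a one-line computation, and I do not anticipate a genuine obstacle. The only point demanding care is the consistent separation of tangential and normal components when checking the two defining properties of a spherical submanifold; in particular, it is worth highlighting that the single identity $g(\nabla_X Y,L)=g(X,Y)$ serves double duty, simultaneously delivering the involutivity of $L^\perp$ and the umbilical form of the second fundamental form.
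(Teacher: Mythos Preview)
Your proposal is correct and follows essentially the same approach as the paper: both derive all three claims directly from Lemma \ref{le:nablaX-L} and $\n{L}_g=1$, computing $\nabla_L L=0$, using $g(\nabla_X Y,L)=g(X,Y)$ for $X,Y\in L^\perp$ to get both involutivity and the umbilical form $\II(X,Y)=g(X,Y)L$, and checking $\nabla_X L\in L^\perp$ for the normal-parallelism condition. The only cosmetic difference is that for this last step you invoke the explicit formula $\nabla_X L=-X$, while the paper instead differentiates $g(L,L)=1$.
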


\begin{proof}
By Lemma \ref{le:nablaX-L} and the fact that $\n{L}_g=1$,
we get $\nabla_L L=-L+g(L,L)L=-L+L=0$ so $L$ is a geodesic vector field.

Let us prove the integrability of $L^\perp$.
If $X,Y\in L^\perp$, then
\[
g([X,Y],L)=&g(\nabla_X Y-\nabla_Y X,L)
=-g(Y,\nabla_X L)+g(X,\nabla_Y L) \\
=&-g(Y,-X+g(X,L)L)+g(X,-Y+g(Y,L)L) \\
=&g(Y,X)-g(X,Y)=0,
\]
i.e., $[X,Y]\in L^\perp$. This proves that $L^\perp$ is involutive and hence integrable.

Let $O$ be an integral manifold of $L^\perp$. 
If $X,Y\in\VF(O)$, we get
\[
g(\nabla_X Y,L)=-g(Y,\nabla_X L)=-g(Y,-X+g(X,L)L)=g(X,Y),
\]
so the second fundamental form $\II_O$ of $O$
is given by
\[
\II_O(X,Y)=g(X,Y)L,\quad X,Y\in T|_x M,\ x\in O,
\]
which means that $O$ is umbilical.

To show that $O$ is spherical, we need to show that $\nabla_X L\in L^\perp$ for all $X\in TO$.
But this is clear since $0=Xg(L,L)=2g(\nabla_X L,L)$.
This completes the proof.
\end{proof}

We now finish the proof of Proposition \ref{pr:main-local} in this case.
By the previous lemma and Theorem \ref{th:warped},
it follows that
locally $(M,g)$ is isometric
to a warped product $(I\times M_1,\diff s^2\oplus_f g_1)$
for some interval $I\subset\R$ and $f\in\Cinf(I)$.
If $(M,g)$ is complete and simply connected, then $I=\R$.
Moreover, one has
\[
\frac{f'}{f} X=\nabla_X L=-X+g(X,L)L=-X,
\]
for any $X\in L^\perp$. It follows that $f(s)=Ce^{-s}$ for some $C\neq 0$.
By rescaling the metric $g_1$ by a constant, we may assume that $C=1$.

%%%%%%%%%%%%%%%%%%%%%%%%%%%%%%%%%%%%%%%%%%%%
\subsection{Proof of Proposition \ref{pr:converse}}
%%%%%%%%%%%%%%%%%%%%%%%%%%%%%%%%%%%%%%%%%%%%

\subsubsection{Case $V_1\cap V_2\neq\{0\}$:}

Suppose $(M,g)=(I\times M_1,\diff s^2\oplus_{e^{-s}} g_1)$.
Let $L:=\doo_s$, $f(s)=e^{-s}$ and compute that for all $Y\in L^\perp$,
\[
\nabla_Y L=\frac{f'}{f} Y=-Y,
\]
and $\nabla_L L=0$, so for every $X\in TM$
\[
\nabla_X L=-X+g(X,L)L.
\]
Define a one-dimensional subbundle of $\pi_{TM\oplus\R}$
whose fibers are $V_1:=\R (L,1)$.
Then $V_1$ is light-like and for every $X\in TM$,
\[
\nabla^{-1}_X (L,1)=(\nabla_X L+X,g(X,L))=g(X,L)(L,1),
\]
which shows that $V_1$ is invariant by parallel transport with respect to $\nabla^{-1}$. In particular, $V_1$ is invariant with respect to $\mcH^{-1}$ and therefore $\mcH^{-1}$ is reducible.
This proves that $\mc{H}^{-1}$ is reducible if $(M,g)$ is of the form {\rm (LW1)}.

\subsubsection{Case $V_1\cap V_2=\{0\}$:}
Assume first that $(M,g)=(I\times M_2\times M_1,\diff s^2\oplus_{\sinh(s)}g_2 \oplus_{\cosh(s)} g_1)$.
Here $I\subset\R$ is an interval not containing $0$.
Define for every $x=(s,x_2,x_1)\in M$,
\[
V_1|_x:=\R(W_1|_x,w_1(x))\oplus (T|_{x_1} M_1\times\{0\})\subset T|_x M\oplus\R,
\]
where
\[
(W_1|_x,w_1(x))&:=\cosh(s)(-\sinh(s)\doo_s,\cosh(s)).
%(W_2|_x,w_2(x))&:=\sinh(s)(cosh(s)\doo_s,-\sinh(s)).
\]
We prove that $V_1$ is invariant under $\mcH^{-1}$.
%\n{(W_1,w_1)}_h^2=c^2s^2-c^4=c^2(s^2-c^2)=-c^2=-w_1, OK

Indeed, let $X\in T|_{x_1} M_1$, $Y\in\VF(M_1)$ and $Z\in T|_{(s,x_2)} (I\times M_2)$.
Then
\[
\nabla_X^{-1} (Y,0)&=(\nabla_X Y,g(X,Y))
=\big(\nabla^{g_1}_X Y-g(X,Y)\tanh(s)\doo_s,g(X,Y)\big)  \\
%&=(\nabla^{g_1}_X Y,0)+\frac{g(X,Y)}{\cosh(s)}\big(-\sinh(s)\doo_s,\cosh(s)) \\
&=(\nabla^{g_1}_X Y,0)+\frac{g(X,Y)}{\cosh^2(s)}(W_1|_x,w_1(x))\in V_1|_{x}, \\
\nabla_Z^{-1} (Y,0)&=(\nabla_Z Y,g(Z,Y))=\big(g(Z,\doo_s)\tanh(s)Y,0\big)\in V_1|_{x},
\]
and if $U\in T|_{x_2} M_2$,
\[
\nabla^{-1}_X (W_1,w_1)&=(-\sinh^2(s)X+w_1 X,0)=(X,0)\in V_1|_x, \\
\nabla^{-1}_U (W_1,w_1)&=(-\cosh^2(s)U+w_1 U,0)=(0,0)\in V_1|_x, \\
\nabla^{-1}_{\doo_s} (W_1,w_1)&=\big((-\sinh^2(s)-\cosh^2(s))\doo_s+w_1\doo_s,2\cosh(s)\sinh(s)-\sinh(s)\cosh(s)\big) \\
&=\sinh(s)(-\sinh(s)\doo_s,\cosh(s))=\tanh(s)(W_1,w_1)\in V_1|_x.
\]

These formulas show that
for all $X\in TM$ and $Y\in \Gamma(\pi_{V_1})$,
one has $\nabla^{-1}_X Y\in\Gamma(\pi_{V_1})$.
Thus $V_1$ is invariant under parallel transport with respect to $\nabla^{-1}$
and therefore it is invariant under $\mcH^{-1}$.
This proves that $\mc{H}^{-1}$ is reducible if $(M,g)$ is of the form {\rm (LW2)}.

Consider then the case where $(M,g)=(O\times M_1,{\bf g}_{k;-1} \oplus_{\cosh(d(\cdot))}\oplus g_1)$,
where $O$ is a normal neighbourhood of a point $x_0\in \HH^k$
and  $d(x)=d(x,x_0)$ is the distance function
from $x_0$ in $\HH^k$.

Observe that $(O\backslash \{x_0\},{\bf g}_{k;-1})$
is isometric to $(]0,\epsilon[\times S^{k-1},\diff s^2\oplus_{\sinh(s)} {\bf g}_{k-1;1})$,
for $\epsilon>0$ or $\epsilon=+\infty$
and $d(x)=s$ if $x\neq x_0$ corresponds to $(s,y)\in ]0,\epsilon[\times S^{k-1}$.
Choosing above $(M_2,g_2)=(S^{k-1},{\bf g}_{k-1;1})$,
we conclude that $\mcH^{-1}|_{]0,\epsilon[\times M_2\times M_1}$ is reducible
and hence by continuity, $\mcH^{-1}$ is reducible on $(M,g)$.
We conclude that if $(M,g)$ is of the form {\rm (LW3)}, then $\mc{H}^{-1}$ is reducible
and hence completes the proof of Proposition~\ref{pr:converse}.

The proof of sufficiency of Theorem \ref{th:main} now follows immediately from
the above, since one could take $O=\HH^k$, the exponential map of $\HH^k$ at $x_0$
being a diffeomorphism of $T|_{x_0} \HH^k$ onto $\HH^k$.

% W_1/ w_1=-\nabla f_1/f_1=-\sinh/\cosh W_1/\n{W_1}_g
% \n{W_1}^2-w_1^2=-w_1
% \n{W_1/w_1}^2_g=1-1/w_1
% 1/w_1=1-(\sinh/cosh)^2=1-\tanh^2=1/cosh^2
% so w_1=\cosh^2
% Then W_1=r(s)\doo_s
% so r(t)^2=\n{W_1}_g^2=-w_1+w_1^2=-\cosh^2+\cosh^4=\cosh^2\sinh^2
% thus r(t)=\pm \cosh \sinh. It seems that "-" case is the correct one...
%\ \newline

\end{document}